\newtheorem{theorem}{Theorem}[section]
\newtheorem{lemma}[theorem]{Lemma}
\newtheorem{corollary}[theorem]{Corollary}
\newtheorem{proposition}[theorem]{Proposition}
\newtheorem{claim}[theorem]{Claim}
\theoremstyle{definition}
\newtheorem{definition}[theorem]{Definition}
\newtheorem{definition-theorem}[theorem]{Definition-Theorem}
\newtheorem{example}[theorem]{Example}
\newtheorem{remark}[theorem]{Remark}
\numberwithin{equation}{section}
\newcommand{\SVT}{\mathrm{SVT}}
\newcommand{\PSVT}{\mathrm{PSVT}}
\newcommand{\QSVT}{\mathrm{QSVT}}
\newcommand{\Mat}{\mathrm{Mat}}
\newcommand{\SSX}{X^{\mathrm{ss}}}
\newcommand{\SSM}{\operatorname{Mat}^{\text{ss}}}
\DeclareMathOperator{\height}{\mathrm{ht}}
\newcommand{\groth}{\mathfrak{G}}
\newcommand{\grothsp}{\groth^{\mathsf{Sp}}}
\newcommand{\IN}{GP}
\newcommand{\INQ}{GQ}
\newcommand{\GL}{\mathrm{GL}}
\newcommand{\Sp}{\mathrm{Sp}}
\newcommand{\Orth}{\mathrm{O}}
\newcommand{\flags}{\mathcal{F}lag}
\DeclareMathOperator{\rank}{\mathrm{rank}}
\DeclareMathOperator{\pf}{\mathrm{pf}}
\DeclareMathOperator{\reg}{\mathrm{reg}}
\newcommand{\BBB}{\mathsf{B}}
\newcommand{\CCC}{\mathsf{C}}
\newcommand{\symm}{\mathbf{S}}
\newcommand{\FPF}{\mathbf{FPF}}
\newcommand{\code}{\mathtt{BCode}}
\newcommand{\Spcode}{\mathtt{SpCode}}
\newcommand{\shape}{\lambda_{\mathsf{B}}}
\newcommand{\Spshape}{\lambda_{\mathsf{Sp}}}
\newcommand{\kpartial}{\overline{\partial}}
\newcommand{\bad}{\textrm{bad}}
\title[Degrees of $P$-Grothendieck polynomials and regularity of Pfaffian varieties]{Degrees of $P$-Grothendieck polynomials and regularity of Pfaffian varieties}
\author{Oliver Pechenik}
\address[OP]{\parbox{\linewidth}{Dept.\ of Combinatorics \& Optimization, University of Waterloo, Waterloo, ON, N2L 3G1, Canada}}
\email{\parbox[t]{\linewidth}{oliver.pechenik@uwaterloo.ca}}
\author{Matthew St.Denis}
\address[MS]{\parbox{\linewidth}{Dept.\ of Combinatorics \& Optimization, University of Waterloo, Waterloo, ON, N2L 3G1, Canada}}
\email{\parbox[t]{\linewidth}{mstdenis@uwaterloo.ca}}
\date{\today}
   \def\MR#1{}
\begin{document}

\begin{abstract}
We prove a formula for the degrees of Ikeda and Naruse's $P$-Grothendieck polynomials using combinatorics of shifted tableaux. We show this formula can be used in conjunction with results of Hamaker, Marberg, and Pawlowski to obtain an upper bound on the Castelnuovo--Mumford regularity of certain Pfaffian varieties known as vexillary skew-symmetric matrix Schubert varieties.
Similar combinatorics additionally yields a new formula for the degree of Grassmannian Grothendieck polynomials and the regularity of Grassmannian matrix Schubert varieties, complementing a 2021 formula of Rajchgot, Ren, Robichaux, St.\ Dizier, and Weigandt.
\end{abstract}

\maketitle{}
\section{Introduction}\label{intro}
There has been much interest recently in the degrees on \emph{Grothendieck polynomials} (e.g., \cite{DMS,Hafner,PSW,Pan.Yu,RRR,RRW,Robichaux}) and related polynomials derived from the combinatorics of $K$-theoretic Schubert calculus. Beyond the intrinsic combinatorial interest of these formulas, a major motivation is that, as first observed in \cite{RRR}, these degrees yield the \emph{Castelnuovo--Mumford regularity} of \emph{matrix Schubert varieties} $X_w$ and certain \emph{Kazhdan--Lusztig varieties} $X_{u,w}$. These affine varieties, introduced by \cite{Fulton} and \cite{Woo.Yong} respectively, are important models for local properties of Schubert varieties. Moreover, $X_w$ and $X_{u,w}$ are generalized determinantal varieties, and in this context various special cases have a long and distinguished history in commutative algebra (see, e.g., \cite{Hochster.Eagon,Narasimhan, Abhyankar,Herzog.Trung, Conca, Conca.Herzog,Gonciulea.Miller, Ghorpade.Krattenthaler, Conca.DeNegri.Gorla}). In Schubert calculus, representatives for the cohomology and $K$-theory classes of Schubert varieties may be obtained as torus-equivariant classes of $X_w$ (e.g., \cite{Fulton,Feher.Rimanyi,Knutson.Miller}), and associated combinatorial formulas may then be obtained from studying Gr\"obner bases (e.g., \cite{Knutson.Miller,Hamaker.Pechenik.Weigandt,Klein.Weigandt}). Castelnuovo--Mumford regularity, meanwhile, is an important algebraic invariant that measures the complexity of the syzygies of defining ideals and simplifies Gr\"obner basis calculations; for a survey of regularity, see \cite{Chardin}.  

Previous work on the degrees of Grothendieck polynomials has been restricted to those in ``type A'', related to the Schubert varieties in \emph{Grassmannians} $\GL_n(\mathbb{C})/P$ and the \emph{complete flag variety} $\GL_n(\mathbb{C})/B$. (Here, $B$ denotes the \emph{Borel subgroup} of upper triangular matrices and $P \supset B$ is a \emph{maximal parabolic subgroup}.) 
In contrast, we study the degrees of \emph{$P$-Grothendieck polynomials} $\IN_\lambda$, which are related in one sense to ``type B'' Schubert calculus and in another to ``type C''. (We note that the ``$P$'' in the name of $P$-Grothendieck polynomials does not refer to a parabolic subgroup, but is instead just the letter ``$P$''.) 

Our main combinatorial theorem is the following.
\begin{theorem} \label{main}
Let $\lambda$ be a strict partition and let $\IN_{\lambda,n}$ be the $P$-Grothendieck polynomial for $\lambda$ in $n$ variables. Let $\Delta = (\Delta_1,\dots,\Delta_\ell)$ be the largest partition contained in $\lambda$ such that all parts of $\Delta$ differ by at least two. Then the degree of the $P$-Grothendieck polynomial is
\[
\deg(\IN_{\lambda,n}) = \begin{cases}
 	|\Delta| + 2n\ell-\ell^2-\ell, & \text{if } \Delta_\ell > 1; \\
 	|\Delta|+2n\ell-\ell^2-n, & \text{if }  \Delta_\ell = 1.
 \end{cases}
\]
\end{theorem}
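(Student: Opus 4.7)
The plan is to use the combinatorial expansion of $\IN_{\lambda,n}$ as a positive-coefficient sum of monomials indexed by primed shifted set-valued tableaux (the $\PSVT$ objects foreshadowed in the paper's notation). Since every coefficient is nonnegative, $\deg(\IN_{\lambda,n})$ equals the maximum number of entries in such a tableau $T$ of shape $\lambda$ with letters bounded by $n$, so the task reduces to the purely combinatorial problem of maximizing $|T|$ over all admissible $T$.

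First, I would recall the monotonicity rules for shifted set-valued tableaux in the primed alphabet $\{1' < 1 < 2' < 2 < \cdots < n' < n\}$: rows and columns are weakly increasing, primed letters do not repeat along rows, unprimed letters do not repeat along columns, and (in the $P$-version) primed letters are forbidden on the main diagonal. Under these rules, a row-by-row capacity analysis motivates the combination $2n\ell - \ell^2 - \ell = \sum_{i=1}^\ell 2(n-i)$ appearing in the theorem: it counts the ``extra'' letters beyond the single-letter minimum that can be crammed into the $i$th row, provided the row is sufficiently isolated from its neighbors.

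Second, I would construct an explicit tableau $T^*$ realizing the claimed degree. The construction packs the rows selected by $\Delta$ as fully as possible, using the ``differ by at least $2$'' hypothesis to avoid column-strictness collisions between consecutive packed rows. The remaining rows of $\lambda \setminus \Delta$ then receive a minimal filling forced by compatibility with the packed rows; by the maximality built into the definition of $\Delta$, any attempt to add a further letter to those rows breaks monotonicity. Tallying gives the generic count $|\Delta| + 2n\ell - \ell^2 - \ell$, adjusted to $|\Delta| + 2n\ell - \ell^2 - n$ when $\Delta_\ell = 1$, since the rightmost cell of the $\ell$th row of $\Delta$ then sits on the diagonal where primed letters are forbidden, docking exactly $n - \ell$ letters.

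The main obstacle will be the matching upper bound: showing no $\PSVT$ of shape $\lambda$ carries more entries than $T^*$. My plan is a column-sweep counting argument, in which every pair of rows that fails the differ-by-$2$ condition is shown to incur a letter deficit in their overlapping column range. The delicate point is ruling out compensation across rows—verifying that no clever filling can trade a local loss for a surplus elsewhere—so that the local deficit propagates to the global bound. The $\Delta_\ell = 1$ case requires a separate check confirming the diagonal restriction exactly accounts for the $\ell - n$ correction, completing the dichotomy in the formula.
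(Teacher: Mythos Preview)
Your proposal diverges from the paper's approach in a way that creates a genuine gap. The paper does \emph{not} construct a maximum-degree tableau of shape $\lambda$; instead it proves $\deg(\IN_{\lambda,n}) = \deg(\IN_{\Delta,n})$ via two constructive reductions (one lemma showing degree is monotone under shape containment, and a ``squish'' lemma that takes any $T \in \PSVT(\lambda,n)$ to some $S \in \PSVT(\Delta,n)$ with $d(S) = d(T)$), and only then exhibits the explicit maximizer $\mathcal{M}_{\Delta,n}$ on the D-partition $\Delta$. The paper even remarks afterward that describing explicit maximum-degree representatives on general $\lambda$ remains open.

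Your lower-bound step runs into this obstacle directly. ``Pack the rows selected by $\Delta$ as fully as possible, then give the remaining rows of $\lambda\setminus\Delta$ the minimal compatible filling'' does not work as stated: when $\ell(\lambda) > \ell(\Delta)$, fully packing row $\ell$ with $[\ell,n]_{\mathbb{S}}$ leaves no legal entry for the box immediately above it (try $\lambda = (6,4,2,1)$, $\Delta = (6,4,2)$). A correct witness on shape $\lambda$ requires partially un-packing the top $\Delta$-row to make room, and you have not specified how to do this while preserving the count $|\Delta| + 2n\ell - \ell^2 - \ell$. Your upper-bound step is also only a plan: the phrase ``column-sweep counting argument'' with the acknowledgment that ``ruling out compensation across rows'' is delicate is not yet an argument. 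This is precisely the content the paper supplies via its squish lemma, which moves entries along short ribbons to absorb an entire row into the one below it whenever two consecutive row lengths differ by exactly one. If you want to salvage your direct-on-$\lambda$ strategy, you will need concrete substitutes for both of these steps; the cleaner route is the paper's reduction to $\Delta$.
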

Our proof of Theorem~\ref{main} is based on direct combinatorial analysis of the tableau formula for $\IN_{\lambda,n}$ given in \cite{IN}. The first step of the proof is a reduction from general $\IN_{\lambda,n}$ to a special subclass $\IN_{\Delta,n}$, while the second step calculates the degree of $\IN_{\Delta,n}$ by explicitly identifying a tableau contributing to its highest-degree component.

$P$-Grothendieck polynomials were introduced by T.~Ikeda and H.~Naruse \cite{INN,IN} as representatives for $K$-theoretic Schubert classes on \emph{maximal orthogonal Grassmannians} $\Orth_{2n+1}(\mathbb{C})/P$, where $P$ is a particular parabolic subgroup of $\Orth_{2n+1}(\mathbb{C})$. It is in this sense that $\IN_\lambda$ is ``type B''. By specializing $\IN_\lambda$, one obtains the classical $P$-Schur functions $P_\lambda$ that were introduced by I.~Schur \cite{Schur} to describe the projective representation theory of symmetric groups and were connected to the cohomological Schubert calculus of maximal orthogonal Grassmannians by P.~Pragacz \cite{Pragacz}.

Recent work of E.~Marberg and B.~Pawlowski \cite{Marberg.Pawlowski:properties} shows that $P$-Grothendieck polynomials coincide with the stable limits of \emph{vexillary symplectic Grothendieck polynomials}, representatives for $K$-theoretic classes of $\Sp_n(\mathbb{C})$-orbit-closures on the flag variety $\GL_n(\mathbb{C})/B$. It is in this sense that $\IN_\lambda$ is related to ``type C''. Just as matrix Schubert varieties provide affine models of Schubert varieties, \emph{skew-symmetric matrix Schubert varieties} $\SSX_w$ provide affine models for $\Sp_n(\mathbb{C})$-orbit-closures (see \cite{Wyser.Yong}). Marberg and Pawlowski \cite{Marberg.Pawlowski:formulas} have shown that, just as ordinary Grothendieck polynomials arise as torus-equivariant classes of ordinary matrix Schubert varieties, symplectic Grothendieck polynomials arise as torus-equivariant classes of skew-symmetric matrix Schubert varieties $\SSX_w$. Skew-symmetric matrix Schubert varieties are instances of Pfaffian varieties \cite{Marberg.Pawlowski:Grobner} and hence have a commutative algebra history in that context (see, e.g., \cite{Herzog.Trung,DeNegri.Gorla:liaison,DeNegri.Gorla:invariants,DeNegri.Sbarra,Lorincz.Raicu.Walther.Weyman}). Indeed, we suspect that vexillary skew-symmetric matrix Schubert varieties are related to the \emph{mixed ladder Pfaffian varieties} of \cite{DeNegri.Gorla:liaison}, for which \cite{DeNegri.Gorla:invariants} determine a recursive regularity formula; forthcoming work by L.~Escobar, A.~Fink, J.~Rajchgot, and A.~Woo \cite{Escobar.Fink.Rajchgot.Woo} will clarify this relationship.

To obtain algebraic consequences of Theorem~\ref{main}, we leverage the connection between $\IN_\lambda$ and \emph{symplectic Grothendieck polynomials}.  We observe that $\SSX_w$ is Cohen--Macaulay and that the degrees of symplectic Grothendieck polynomials yield its regularity, so that, after describing the relation between degrees of symplectic Grothendieck polynomials and of $P$-Grothendieck polynomials, Theorem~\ref{main} implies the following, which is our main algebraic result.
\begin{theorem} \label{regularity_theorem}
Let $z \in \symm_n$ be an FPF-vexillary fixed-point-free involution with associated symplectic shape the partition $\Spshape(z) = \lambda$. Let $k$ be the position of the last nonzero entry of the symplectic code $\Spcode(z)$ and let $\Delta$ be the largest partition contained in $\lambda$ such that all parts differ by at least two. 

Then the Castelnuovo--Mumford regularity of the vexillary skew-symmetric matrix Schubert variety 
$\SSX_z$ satisfies
\[\reg(\SSX_z) \leq \begin{cases}
 	2k\ell-\ell^2-\ell - \big(|\Spshape(z)| - |\Delta|\big), & \text{if } \Delta_\ell > 1; \\
 	2k\ell-\ell^2-k  - \big(|\Spshape(z)| - |\Delta|\big), & \text{if }  \Delta_\ell = 1.
 \end{cases}\]
\end{theorem}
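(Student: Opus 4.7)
The plan is to convert the algebraic invariant $\reg(\SSX_z)$ into a degree computation that Theorem~\ref{main} can evaluate directly. The strategy has three steps: (1) use Cohen--Macaulayness and the identification of $\grothsp_z$ with the K-theoretic class of $\SSX_z$ to pass from $\reg(\SSX_z)$ to $\deg(\grothsp_z)$ minus the codimension; (2) relate this to $\deg(\IN_{\lambda,k})$ using the Marberg--Pawlowski stable-limit identification, with $k$ controlling the ``effective'' number of variables; (3) substitute Theorem~\ref{main} at $n=k$ and simplify.

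For the first step, I would establish that $\SSX_z$ is Cohen--Macaulay by adapting a Gr\"obner degeneration argument from the setting of ordinary matrix Schubert varieties, building on Marberg and Pawlowski's Pfaffian Gr\"obner basis results \cite{Marberg.Pawlowski:Grobner}. Combined with their identification of $\grothsp_z$ as the $\Sp_n$-equivariant K-class of $\SSX_z$ \cite{Marberg.Pawlowski:formulas}, and with the vexillary codimension formula $\operatorname{codim}(\SSX_z) = |\Spshape(z)|$, this yields
\[ \reg(\SSX_z) = \deg(\grothsp_z) - |\Spshape(z)|, \]
in direct analogy with the type A identity exploited in \cite{RRR}.

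For the second step, the Marberg--Pawlowski theorem \cite{Marberg.Pawlowski:properties} identifies the stable limit of vexillary symplectic Grothendieck polynomials with $P$-Grothendieck polynomials; under our hypotheses this stable limit is $\IN_\lambda$ for $\lambda = \Spshape(z)$. I expect the position $k$ of the last nonzero entry of $\Spcode(z)$ to bound the number of variables on which $\grothsp_z$ effectively depends, in analogy with how the Lehmer code governs the essential support of an ordinary Grothendieck polynomial. This should give the inequality
\[ \deg(\grothsp_z) \leq \deg(\IN_{\lambda,k}). \]
Chaining the two displays and then substituting the explicit value of $\deg(\IN_{\lambda,k})$ from Theorem~\ref{main} (with the cases $\Delta_\ell > 1$ and $\Delta_\ell = 1$ preserved) yields exactly the stated bounds after rearranging $|\Spshape(z)| - |\Delta|$.

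The main obstacle is the second step: the analog of the Lehmer-code variable-truncation fact for symplectic Grothendieck polynomials of FPF-vexillary involutions must be verified carefully, since stable-limit statements do not automatically yield a sharp bound on the number of variables at the finite level. A secondary difficulty is the codimension formula $\operatorname{codim}(\SSX_z) = |\Spshape(z)|$ for FPF-vexillary $z$, which I would aim to extract either directly from \cite{Marberg.Pawlowski:properties} or from an FPF length identity; the Cohen--Macaulay step is expected to be routine given the existing Gr\"obner theory for Pfaffian varieties but must still be checked in this specific vexillary setting.
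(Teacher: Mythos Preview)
Your proposal is correct and matches the paper's approach essentially step for step: Cohen--Macaulayness via the Marberg--Pawlowski Gr\"obner degeneration, the identity $\reg(\SSX_z) = \deg(\grothsp_z) - |\Spshape(z)|$, the bound $\deg(\grothsp_z) \leq \deg(\IN_{\lambda,k})$, and substitution of Theorem~\ref{main}. The paper resolves your flagged ``main obstacle'' exactly as you anticipate, combining the involution pipe dream formulas of \cite{Hamaker.Marberg.Pawlowski:pipe} (which give $\grothsp_z \in \mathbb{C}[x_1,\dots,x_k]$) with a monotonicity lemma $\grothsp_z \preceq \grothsp_{21 \times z}$ proved via Hecke atoms, and it extracts the codimension formula from \cite{Hamaker.Marberg.Pawlowski:JComb,Hamaker.Marberg.Pawlowski:pipe}.
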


In contrast to earlier ``type A'' work (e.g., \cite{RRR,PSW}), we obtain only an upper bound on regularity instead of an exact formula. This feature arises from the fact that we currently do not have a formula for the degrees of symplectic Grothendieck polynomials, but rather only partial information extracted from such a formula for $\IN_\lambda$. It would be very interesting to study the degrees of symplectic Grothendieck polynomials directly. And indeed one might hope to be able to do so, perhaps using the combinatorics developed in \cite{Marberg.Pawlowski:formulas,Hamaker.Marberg.Pawlowski:pipe,Marberg.Pawlowski:Grobner}.

In Section~\ref{symdegrees}, we imitate the proof of Theorem~\ref{main} to prove a new formula (Theorem~\ref{thm:typeAdegree}) for the degrees of \emph{symmetric Grothendieck polynomials}. These polynomials represent $K$-theoretic Schubert classes on a Grassmannian $\GL_n(\mathbb{C})/P$. As shown in \cite{RRR}, the degree of a symmetric Grothendieck polynomial yields the Castelnuovo--Mumford regularity of a corresponding Grassmannian matrix Schubert variety; hence, our degree formula yields a new regularity formula (Corollary~\ref{cor:typeAreg}) in this context. Our formula differs than those given previously by \cite{RRR,Hafner,RRW,PSW} and it is unclear how to relate it to those earlier formulas. (However, Proposition~\ref{prop:anna_sez} and surrounding discussion give some tentative connections to the formulas of \cite{PSW}.) Moreover, our proof is arguably somewhat easier. Except for some background on matrix Schubert varieties in Section~\ref{sec:MSV}, Section~\ref{symdegrees} can be read independently of Sections~\ref{degrees} and~\ref{regularity} by readers whose primary interest is this type A setting.

Finally, in Section~\ref{Q}, we discuss obstacles to proving an analogue of Theorem~\ref{main} for \emph{$Q$-Grothendieck polynomials} $\INQ_\lambda$. It is perhaps surprising that this analogous problem should be substantially more difficult, since the classical \emph{$Q$-Schur polynomials} obtained by specializing $\INQ_\lambda$ barely differ from $P$-Schur polynomials. Nonetheless, there are significant technical difficulties and we provide only partial progress. A relevant fact may be that $Q$-Grothendieck polynomials are related to the $K$-theoretic Schubert calculus of \emph{Lagrangian Grassmannians} $\Sp_n(\mathbb{C})/P$ (see \cite{IN}) and likely related to the \emph{symmetric matrix Schubert varieties} that give affine models for $\Orth_n(\mathbb{C})$-orbit-closures on $\GL_n(\mathbb{C})/B$; the geometry of all these varieties is known to be significantly harder and worse-behaved than those of the varieties associated to $\IN_\lambda$ (see, e.g., \cite{Pin,Buch.Ravikumar,Pechenik.Yong,Wyser.Yong,Marberg.Pawlowski:Grobner} for related remarks).

The earlier sections of this paper are organized as follows. Section~\ref{sec:background} recalls combinatorial and algebraic background. Section~\ref{degrees} establishes Theorem~\ref{main}. Section~\ref{regularity} recalls additional background related to (skew-symmetric) matrix Schubert varieties and establishes Theorem~\ref{regularity_theorem}.

\section{Background}\label{sec:background}

\subsection{Combinatorial background: Tableaux and Grothendieck polynomials}
Diagrams of partitions are drawn in French notation, with the largest part at the bottom of the diagram. 
A partition $\lambda = (\lambda_1, \dots ,\lambda_n)$ is called \textit{strict} if $\lambda_{i-1} -\lambda_i \geq 1$ for all $i$. 
If $\lambda$ is a strict partition, then the \textit{shifted diagram} of $\lambda$ is obtained by appending $i-1$ empty spaces to the left of the $i$th row of the diagram of $\lambda$. The \textit{main diagonal} of a shifted diagram consists of the leftmost box in each row. Below is the diagram (left) and shifted diagram (right) of the partition $6421$, with the main diagonal shaded in red \textcolor{red}{$\blacksquare$}:
\[
\ytableausetup{boxsize=.75cm}
\begin{ytableau}
\\
&\\
& & &\\
& & & & &
\end{ytableau}
\hspace{1.5cm}
\begin{ytableau}
\none & \none &\none &*(red)\\
\none &\none & *(red)&\\
\none & *(red)& & &\\
*(red)& & & & &
\end{ytableau}
\]

If $\lambda_{i-1}-\lambda_i \geq 2$ for all $i$, then $\lambda$ is called a \emph{D-partition} (from the French: \textit{Diff\'erence}-partition; see~\cite{Viennot}). We draw the diagram of a D-partition without further shifting of the rows.

Given a partition $\lambda$, we write $\mathsf{B} \in \lambda$ to denote a specific box of the diagram. To ease manipulations of adjacent boxes, we use the notations $\mathsf{B}^\uparrow,\mathsf{B}^\rightarrow,\mathsf{B}^\downarrow,\mathsf{B}^\leftarrow$ to denote the boxes immediately above, to the right, below, and to the left of $\mathsf{B}$, respectively. When there is no box above $\mathsf{B}$ in $\lambda$, it is convenient to simply allow $\mathsf{B}^\uparrow$ to be an empty box whose contents are the empty set, to avoid the repetitive need to qualify statements with ``provided such a box exists''. We make identical definitions in the other directions.

A \textit{filling} of a diagram $\lambda$ with a totally ordered alphabet $\mathbb{A}$ is a function $T: \lambda \rightarrow \mathbb{A}$, which ``fills" each box $\mathsf{B} \in \lambda$ with an element of $\mathbb{A}$. A \textit{set-valued} filling $T$ assigns to each $\mathsf{B} \in \lambda$ a nonempty subset $T(\mathsf{B}) \in \mathcal{P}(\mathbb{A}) \setminus \varnothing$. Ordinary diagrams are filled by the natural numbers $\mathbb{Z}_{>0}$, while shifted diagrams are instead filled using the alphabet
\[\mathbb{S} =  1' < 1 < 2' < 2 < 3' < 3 < \cdots  .\]
We use interval notation to refer to subsets of the alphabets $\mathbb{Z}_{>0}$ and $\mathbb{S}$:
\[[a,b] = \{c \in \mathbb{Z}_{>0}: a \leq c \leq b\}, \quad [a,b]_{\mathbb{S}} = \{c \in \mathbb{S}: a \leq c \leq b\}.\]
Following \cite{Buch}, we call a set-valued filling $T$ of an ordinary diagram $\lambda$ a \emph{(semistandard) set-valued tableau} if $T$ satisfies the following two properties for all $\mathsf{B} \in \lambda$:

\begin{itemize}
\item{$\max(T(\mathsf{B})) < \min(T(\mathsf{B}^{\uparrow}))$},
\item{$\max(T(\mathsf{B})) \leq \min(T(\mathsf{B}^{\rightarrow}))$.}
\end{itemize}

The \emph{content vector} $c(T)$ of a set-valued tableau is defined as
\[c(T) = (\textup{\# of ones in } T,\text{\# of twos in }T,\dots)\]
and we say the \emph{degree} of $T$ is $d(T) = \sum c(T)$.

\begin{example}\label{ex:SVT} The following is a semistandard set-valued tableau of shape $\lambda = (6,4,2,1)$, with content $(1,3,4,3,5,3)$ and degree $19$.
\[
\begin{ytableau}
56\\
4 & 6\\
3 & 345 & 5 & 5\\
12 & 2 & 23 & 3 & 45 & 6
\end{ytableau}
\]
\end{example}

We denote the set of all semistandard set-valued tableaux on $\lambda$ by $\SVT(\lambda)$, and the subset with entries from $[1,n]$ by $\SVT(\lambda,n)$.

\begin{definition}[{\cite{Lascoux.Schutzenberger:Hopf,Buch}}] \label{grothendieckdef}
The \emph{symmetric Grothendieck polynomial} for $\lambda$ in $n$ variables is the polynomial
\begin{equation}
G_{\lambda,n} = \sum_{T \in \SVT(\lambda,n)}(-1)^{d(T)-|\lambda|}\mathbf{x}^{c(T)}.
\end{equation}
\end{definition}
For example, the tableau of Example~\ref{ex:SVT} contributes the term $(-1)^{19-13} \cdot x_1 x_2^3 x_3^4 x_4^3 x_5^5 x_6^3$ to the polynomial $G_{6421,k}$ for any $k \geq 6$. The polynomial $G_{\lambda,n}$ is symmetric in the variables $x_1, \dots, x_n$; for a bijective proof of this fact, see \cite{Ikeda.Shimazaki}.

\bigskip
The analogous definitions we will need for shifted diagrams are as follows.

\begin{definition} \label{ptabdef}
A set-valued filling $T$ of $\lambda$ is a \emph{$P$-shifted set-valued tableau} if for all boxes $\mathsf{B}$ in the shifted diagram of $\lambda$:
\begin{itemize}
\item{If $\max(T(\mathsf{B}))$ is primed, then 
\begin{itemize}
\item{$\max(T(\mathsf{B})) \leq \min(T(\mathsf{B}^\uparrow))$.}
\item{$\max(T(\mathsf{B})) < \min(T(\mathsf{B}^\rightarrow))$.}
\end{itemize}
}
\item{If $\max(T(\mathsf{B}))$ is unprimed, then 
\begin{itemize}
\item{$\max(T(\mathsf{B})) < \min(T(\mathsf{B}^\uparrow))$.}
\item{$\max(T(\mathsf{B})) \leq \min(T(\mathsf{B}^\rightarrow))$.}
\end{itemize}}
\item{No boxes on the main diagonal of $\lambda$ contain any primed entries.}
\end{itemize}
We denote the set of all $P$-shifted set-valued tableaux on $\lambda$ by $\PSVT(\lambda)$, and the subset with entries from $[1',n]_\mathbb{S}$ by $\PSVT(\lambda,n)$. (In fact, the conditions of Definition~\ref{ptabdef} prevent the symbol $1'$ from appearing.)
\end{definition}
The \emph{content vector} $c(T)$ of a $P$-shifted set-valued tableau $T$ is defined as
\[c(T) = (\textup{\# of ones in } T,\text{\# of twos in }T,\dots)\]
and we say the \emph{degree} of $T$ is $d(T) = \sum c(T)$. We emphasize that neither content nor degree distinguish between primed and unprimed entries.
\begin{example} \label{Pex} The following is a $P$-shifted set-valued tableau of shape $\lambda = 6421$, with content $(2,3,4,6,2)$ and degree $17$.
\[\begin{ytableau}
\none &\none & \none & 5\\
\none & \none & 34 & 45'\\
\none & 2 & 2 & 3' & 4'4\\
1& 1 & 2' & 3' & 34' & 4 
\end{ytableau}\]
\end{example}

\begin{definition}[\cite{INN,IN}] \label{pgrothendieckdef}
The \emph{$P$-Grothendieck polynomial} for $\lambda$ in $n$ variables is the polynomial
\begin{equation}
\IN_{\lambda,n} = \sum_{T \in \PSVT(\lambda,n)}\beta^{d(T)-|\lambda|}\mathbf{x}^{c(T)}.
\end{equation}
\end{definition}
For example, the tableau of Example~\ref{Pex} contributes the term $(-1)^{17-13} \cdot x_1^2 x_2^3 x_3^4 x_4^6 x_5^2$ to the polynomial $\IN_{6421,k}$ for any $k \geq 5$. For a proof of the symmetry of $P$-Grothendieck polynomials, see \cite{IN}.

\begin{remark}
	The original definition of the $P$-Grothendieck polynomials from \cite{INN,IN}  considers the limit of these polynomials by summing over the full set $\PSVT(\lambda)$. By taking lowest-degree terms, one recovers the classical $P$-Schur functions \cite{Schur}.
\end{remark}

\subsection{Algebraic background: \texorpdfstring{$\mathcal{K}$}{K}-polynomials and Castelnuovo--Mumford regularity} \label{algebraic.background}
Most of the background summarized in this subsection consists of standard commutative algebra, and proofs of all stated facts without their own citations can be found in, for instance, \cite{Eisenbud} or \cite{Miller.Sturmfels}. 

Take $S = \mathbb{C}[x_1, \dots, x_n]$ a polynomial ring with the standard grading $\deg(x_{i}) = 1$, and take $I \subseteq S$ a homogeneous ideal.
For any finitely generated graded $S$-module $M$, we denote the $\mathbb{C}$-vector space of all homogeneous degree $a$ elements of $M$ by $M_a$. Since $M$ is finitely generated, the dimension of $M_a$ is finite for all $a$, and so we can define the \emph{Hilbert series} of $M$ as the formal power series
\[H(M;t) \coloneqq \sum_{a \in \mathbb{Z}_{\geq 0}}\dim_\mathbb{C}(M_a)t^a.\]  
It is often useful to write the Hilbert series as a ratio of two polynomials
\[H(M;t) = \frac{\mathcal{K}(M;t)}{(1-t)^{n}},\]
in which case the polynomial $\mathcal{K}(M;t)$ is referred to as the \emph{$\mathcal{K}$-polynomial} of the module $M$; see \cite{Miller.Sturmfels} for further discussion.

We define the degree-shifted module $M(-j)$ via the condition $M(-j)_a = M_{a-j}$ for all $a$. With this notation, a \emph{free resolution} of $M$ is an exact sequence of graded $S$-modules
\[\dots \rightarrow \bigoplus_{i \in \mathbb{Z}}S(-i)^{b_i^k} \rightarrow \dots \rightarrow \bigoplus_{i \in \mathbb{Z}}S(-i)^{b_i^1} \rightarrow \bigoplus_{i \in \mathbb{Z}}S(-i)^{b_i^0} \rightarrow M \rightarrow 0.\]
A free resolution is called \emph{minimal} if the value of $b_i^j$ is minimized simultaneously for all indices $i,j$. In our situation, there is a unique finite minimal free resolution for any finitely generated graded $M$ by Hilbert's Syzygy Theorem.

Since minimal free resolutions are unique, we can define the \emph{(Castelnuovo--Mumford) regularity} of $M$ as
\[\textup{reg}(M) \coloneqq \max\{i-j:b_i^j \not= 0\}.\] For additional background on regularity, we refer the reader to the survey \cite{Chardin}. 
 The definition of regularity directly offers a kind of bound on the complexity of the free resolution of $M$. We will only be concerned in this paper with the regularity of ideals $I$ and their quotient modules $S/I$, which are essentially the same information, since $\reg I = 1 + \reg (S/I)$. 
 
 In the case of a polynomial ideal $I$, the regularity gives information on the complexity of \emph{Gr\"obner bases} of $I$, via the following theorem of D.~Bayer and M.~Stillman. (For background on Gr\"obner bases and related undefined notions, see \cite{Eisenbud,Cox.Little.OShea,Ene.Herzog}.)
 %working over C so characteristic 0
\begin{theorem}[{\cite[Corollary~2.5 \& Proposition~2.11]{Bayer.Stillman}}]\label{thm:BayerStillman}
    Fix a grevlex term order. If $I \subset S$ is a homogeneous ideal in generic coordinates with Castelnuovo--Mumford regularity $m$, then the highest-degree element of a minimal Gr\"obner basis for $I$ has degree exactly $m$.
\end{theorem}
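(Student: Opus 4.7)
The plan is to adapt the classical argument of Bayer and Stillman by passing to the generic initial ideal. The first step would be to invoke Galligo's theorem: in generic coordinates, the initial ideal $\operatorname{in}(I)$ of any homogeneous ideal with respect to the grevlex term order is \emph{Borel-fixed}, meaning it is fixed under the action of the upper triangular matrices. Since the hypothesis places $I$ in generic coordinates to begin with, $\operatorname{in}(I)$ inherits this Borel-fixed property directly.

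The second and most substantial step is to establish the key regularity equality $\operatorname{reg}(I) = \operatorname{reg}(\operatorname{in}(I))$. The inequality $\operatorname{reg}(\operatorname{in}(I)) \geq \operatorname{reg}(I)$ is general and follows from upper semicontinuity of graded Betti numbers under passage to initial ideals. The reverse inequality is the technical content of Bayer--Stillman and requires exploiting the particular structure of grevlex in generic coordinates; one standard route uses the characterization of regularity in terms of the vanishing of local cohomology combined with the fact that, for Borel-fixed ideals in characteristic zero, the Koszul homology is well-understood. Having secured this equality, the third step would be to compute $\operatorname{reg}(\operatorname{in}(I))$ directly: for any Borel-fixed (more generally, stable) monomial ideal $J$, the Eliahou--Kervaire resolution provides an explicit minimal free resolution from which one reads off that $\operatorname{reg}(J)$ equals the maximum degree among the minimal monomial generators of $J$.

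Finally, I would tie everything back to Gröbner bases via a short combinatorial observation. By definition of ``minimal'' Gröbner basis, the leading monomials of its elements are precisely the minimal monomial generators of $\operatorname{in}(I)$, and since $I$ is homogeneous, each basis element has the same degree as its leading monomial. Hence the largest degree appearing in a minimal Gröbner basis of $I$ equals the largest degree of a minimal generator of $\operatorname{in}(I)$, which by the preceding steps equals $\operatorname{reg}(\operatorname{in}(I)) = \operatorname{reg}(I) = m$.

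The main obstacle is clearly the second step, the identity $\operatorname{reg}(I) = \operatorname{reg}(\operatorname{in}(I))$ in generic coordinates. Careful tracking of graded Betti numbers under degeneration is required, and the statement is delicate in that it depends crucially on the choice of grevlex: for other term orders such as lex, the analogous regularity comparison can fail. Once that comparison theorem is in place, however, the remaining work is essentially bookkeeping coupled with the well-known Eliahou--Kervaire resolution.
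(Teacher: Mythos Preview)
The paper does not prove this theorem at all: it is quoted as a known result from the literature, with an explicit citation to \cite[Corollary~2.5 \& Proposition~2.11]{Bayer.Stillman}, and no argument is given in the present paper. So there is no ``paper's own proof'' to compare your proposal against.

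That said, your outline is a reasonable and essentially correct sketch of one standard modern route to the Bayer--Stillman theorem. A few remarks. First, the original Bayer--Stillman argument (1987) predates the Eliahou--Kervaire resolution (1990) and instead proceeds more directly, characterizing regularity via the behavior of generic linear forms (specifically, when the last variables are almost-regular on $S/I$) and showing that for grevlex in generic coordinates this behavior passes unchanged to the initial ideal. Your route via the generic initial ideal, the identity $\operatorname{reg}(I) = \operatorname{reg}(\operatorname{gin}(I))$, and the Eliahou--Kervaire resolution is the cleaner textbook presentation one now finds in, e.g., Eisenbud's \emph{The Geometry of Syzygies}. Second, a small caveat: Borel-fixed implies strongly stable only in characteristic zero, which is needed for Eliahou--Kervaire to apply; since the ambient paper works over $\mathbb{C}$ this is harmless, but it is worth flagging. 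Third, in your final step you should say ``reduced'' (or at least ``minimal with monic leading terms'') rather than just ``minimal'' Gr\"obner basis to guarantee that each element is homogeneous of the same degree as its leading term; otherwise lower-degree tails could in principle be present, though this does not affect the maximum degree.
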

There is a beautiful combinatorial Gr\"obner theory for matrix Schubert varieties, which is quite well developed (e.g., \cite{Knutson.Miller,Knutson.Miller.Yong,Hamaker.Pechenik.Weigandt,Conca.DeNegri.Gorla:Vietnam,Klein,Klein.Weigandt}) and explains algebraically many of the combinatorial formulas for Schubert, Schur, and Grothendieck polynomials. Contrastingly, the analogous theory for skew-symmetric matrix Schubert varieties is understood for only a single special term order \cite{Marberg.Pawlowski:Grobner}. Through Theorem~\ref{thm:BayerStillman}, our Theorem~\ref{regularity_theorem} may provide a hint towards a broader theory.

Computing the regularity of an arbitrary module often requires technical work with free resolutions or local cohomology. However, as noted in \cite{RRR}, when $S/I$ is a Cohen--Macaulay ring, then the following lemma is known (see \cite[Lemma~2.5]{Benedetti.Varbaro} for explanation) and allows one to compute the regularity from the ideal by combinatorial methods.

\begin{lemma}\label{cohenmacreg} Let $I \subseteq S$ and let $S/I$ be a Cohen--Macaulay ring. Then 
\[\textup{reg}(S/I) = \deg (\mathcal{K}(S/I;t))-\height(I).\]
\end{lemma}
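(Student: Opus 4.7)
The plan is to pass through the $h$-polynomial form of the Hilbert series and reduce to the Artinian case by killing a linear regular sequence. Writing the Hilbert series in lowest terms as
\[
H(S/I;t) = \frac{h(t)}{(1-t)^d},
\]
where $d = \dim(S/I)$ and $h(1) \neq 0$, and comparing to the presentation $H(S/I;t) = \mathcal{K}(S/I;t)/(1-t)^n$, I obtain the factorization $\mathcal{K}(S/I;t) = h(t) \cdot (1-t)^{n-d}$. Since $S$ itself is Cohen--Macaulay, the standard identity $\height(I) = n - \dim(S/I) = n - d$ holds, so
\[
\deg \mathcal{K}(S/I;t) = \deg h(t) + \height(I).
\]
It therefore suffices to prove $\reg(S/I) = \deg h(t)$.

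For this identification, I would exploit the Cohen--Macaulay hypothesis to pick, after a generic change of coordinates (here $\mathbb{C}$ being infinite is important), a regular sequence $\ell_1, \dots, \ell_d$ of linear forms on $S/I$. Setting $A = (S/I)/(\ell_1, \dots, \ell_d)$ produces an Artinian graded quotient. Two ingredients drive the rest: first, that quotienting by a linear nonzerodivisor both preserves Castelnuovo--Mumford regularity and strips one factor of $(1-t)$ from the Hilbert series; and second, that for the resulting Artinian graded ring $A$, whose Hilbert series is the polynomial $h(t)$, one has $\reg(A) = \max\{a : A_a \neq 0\} = \deg h(t)$, because the local cohomology of an Artinian graded module is concentrated in cohomological degree zero and $H^0_\mathfrak{m}(A) = A$. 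Chaining the equalities yields $\reg(S/I) = \deg h(t) = \deg \mathcal{K}(S/I;t) - \height(I)$, as required.

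The main technical obstacle is the invariance of regularity under quotienting by a linear nonzerodivisor. This can be verified directly from the short exact sequence $0 \to M(-1) \xrightarrow{\ell} M \to M/\ell M \to 0$ by taking the long exact sequence in local cohomology and combining with the definition $\reg(M) = \max\{i + j : H^i_\mathfrak{m}(M)_j \neq 0\}$; concretely, the connecting map forces the top nonvanishing degree of each local cohomology piece of $M/\ell M$ to match that of $M$ (after the standard shift). Since this package is already packaged in \cite[Lemma~2.5]{Benedetti.Varbaro}, as cited in the paper, the cleanest presentation would invoke that reference rather than reproduce the local cohomology bookkeeping.
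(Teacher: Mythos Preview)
Your argument is correct and is the standard one: factor $\mathcal{K}(S/I;t)=h(t)(1-t)^{n-d}$, identify $n-d=\height(I)$, then reduce modulo a linear regular sequence to an Artinian quotient whose regularity visibly equals $\deg h(t)$. The paper, however, does not supply its own proof of this lemma at all; it simply records the statement as known and points the reader to \cite[Lemma~2.5]{Benedetti.Varbaro} (noting that the observation in this Schubert-calculus context goes back to \cite{RRR}). So your proposal is not really a ``different route'' but rather a fleshed-out version of the very argument the paper outsources to that reference. One small quibble: the identity $\height(I)=n-\dim(S/I)$ holds because $S$ is a polynomial ring over a field (catenary, equidimensional), not specifically because $S$ is Cohen--Macaulay; and you are silently using the equivalence of the Betti-number and local-cohomology definitions of regularity, whereas the paper states only the former. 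Neither affects correctness.
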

For ideals pertaining to Schubert calculus, both terms on the right side of Lemma~\ref{cohenmacreg} can be more approachable than the regularity itself. The height of $I$ can often be computed from the indexing combinatorics for $I$, and the polynomial $\mathcal{K}(S/I;t)$ often has some known combinatorial description as a generating polynomial, such as a Grothendieck polynomial. Thus, the problem of computing the regularity of $I$ can be solved if one is able to compute the degree of the $\mathcal{K}$-polynomial through combinatorial means. The approach provided by this lemma is the underpinning of all combinatorial computations of regularity overviewed in Section~\ref{intro}, and will be followed in this paper as well.

\section{Degrees of \texorpdfstring{$P$}{P}-Grothendieck polynomials and proof of Theorem~\ref{main}}\label{degrees}
By Definition~\ref{pgrothendieckdef}, the degree of the $P$-Grothendieck polynomial $\IN_{\lambda,n}$ is the maximum degree amongst all tableaux $T \in \PSVT(\lambda,n)$. We compute this number in two steps. First, we establish that for all partitions $\lambda$, the degree of $\IN_{\lambda,n}$ is equal to the degree of $\IN_{\Delta,n}$, where $\Delta$ is the largest D-partition  contained in $\lambda$. This reduces the problem to computing the maximum degree of a tableau in $\PSVT(\Delta,n)$, in which case we explicitly construct a tableau of maximum degree, with a sufficiently simple form that the degree can be directly calculated by elementary counting.

\begin{lemma}\label{lem:order_containment}
	Suppose $\mu \subseteq \lambda$ are strict partitions and let $n \geq \ell(\lambda)$. If $T \in \PSVT(\mu,n)$ is a $P$-shifted set-valued tableau, then there exists a $T' \in \PSVT(\lambda,n)$ with $d(T') \geq d(T)$.
\end{lemma}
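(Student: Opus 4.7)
My plan is to proceed by induction on $|\lambda| - |\mu|$, reducing to the case of a single-box extension $\nu := \mu \cup \{\mathsf{B}_0\}$. I first verify that when $\mu \subsetneq \lambda$ are both strict, one can always choose such a strict partition $\nu$: letting $i^*$ be the smallest index with $\mu_{i^*} < \lambda_{i^*}$, minimality forces $\mu_j = \lambda_j$ for $j < i^*$, hence $\mu_{i^*-1} = \lambda_{i^*-1} \geq \lambda_{i^*}+1 \geq \mu_{i^*}+2$ by strictness of $\lambda$, so one may append a single box at the rightmost end of row $i^*$ in $\mu$ while keeping $\nu$ strict. Iterating reduces the lemma to the single-box statement.

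For the single-box extension, I would first attempt the naive construction $T'(\mathsf{B}) = T(\mathsf{B})$ for $\mathsf{B} \in \mu$ together with $T'(\mathsf{B}_0) = \{n\}$. Because $\mathsf{B}_0$ lies at the rightmost end of its row in $\nu$, the neighbors $\mathsf{B}_0^\uparrow$ and $\mathsf{B}_0^\rightarrow$ are absent in $\nu$, so the only constraint to verify is the column condition involving $\mathsf{B}_0^\downarrow$ (when it exists). If $\mathsf{B}_0^\downarrow$ does not exist or its maximum entry is primed or strictly less than $n$, this construction is already valid and achieves $d(T') = d(T)+1$.

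The remaining case is $\max T(\mathsf{B}_0^\downarrow) = n$ unprimed, where the strict column condition would demand $\min T'(\mathsf{B}_0) > n$, an impossibility. Here I would modify $T$ at $\mathsf{B}_0^\downarrow$ as follows: if $|T(\mathsf{B}_0^\downarrow)| \geq 2$, simply delete the entry $n$ from $T(\mathsf{B}_0^\downarrow)$, which balances the single entry added at $\mathsf{B}_0$ and preserves $d$; if $T(\mathsf{B}_0^\downarrow) = \{n\}$, replace it with $\{n'\}$ (legal since $\mathsf{B}_0^\downarrow$ cannot be on the main diagonal under this hypothesis), preserving its per-box degree while enabling $T'(\mathsf{B}_0) = \{n\}$ via the primed comparison $n' \leq n$. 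The main obstacle will be that this singleton substitution can invalidate the row constraint with $\mathsf{B}_0^\downarrow$'s left neighbor, and—by further propagation—column constraints on boxes directly below. I anticipate resolving this with a cascade of analogous local singleton-to-singleton adjustments along row $i^*-1$ (and, where forced, into lower rows), each preserving per-box degree, and arguing that the cascade terminates either at a non-singleton box (yielding a single compensating degree loss) or at the boundary of the diagram, so that the total net degree change in passing from $T$ to $T'$ is nonnegative and $d(T') \geq d(T)$ in all configurations.
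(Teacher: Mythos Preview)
Your plan matches the paper's approach: reduce by induction to a one-box extension, place $n$ in the new box $\mathsf{B}_0$, and repair any resulting violation by a cascade of singleton adjustments whose only possible degree loss is a single deletion at a terminating non-singleton box. Two points need sharpening. First, the cascade is not ``along row $i^*-1$'': after one left step the conflict can only propagate downward, and the paper formalizes the resulting down--left--down--left staircase as a \emph{short ribbon} $\mathsf{B}_0,\ \mathsf{B}_1 = \mathsf{B}_0^\downarrow,\ \mathsf{B}_2 = \mathsf{B}_1^\leftarrow,\ \mathsf{B}_3 = \mathsf{B}_2^\downarrow,\ \ldots$, with the adjustments $n \mapsto n'$, $n' \mapsto n{-}1$, $n{-}1 \mapsto (n{-}1)'$, $\ldots$ along it. Second, your parenthetical that $\mathsf{B}_0^\downarrow$ is off the main diagonal is correct, but when $\mathsf{B}_0$ itself lies on the main diagonal (equivalently $\mu_{i^*}=0$) the even-indexed ribbon boxes $\mathsf{B}_2,\mathsf{B}_4,\ldots$ land on the diagonal and may not receive primed entries; there the conflict at $\mathsf{B}_2$ is that it contains the \emph{unprimed} $n$, and the fix must decrement $n \mapsto n-1$ rather than toggle to a prime. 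The paper treats this as a parallel ``Case~2'' with its own inclusion rule and adjustment; once this is in place, your termination and degree-count argument goes through unchanged.
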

\begin{proof}
	By induction, it suffices to assume that $|\lambda| - |\mu| = 1$. Let the unique box of $\lambda \setminus \mu$ be $\BBB_0$. We will construct $T'$ from $T$ by filling the box $\BBB_0$ with the value $n$ and then possibly making adjustments to other boxes to remain a valid tableau. For an illustration of the algorithm, see Example~\ref{ex:order_containment}.

 Extending the definition from \cite{Thomas.Yong}, we say a \emph{short ribbon} is an edge-connected set of boxes that does not contain a $2 \times 2$ subshape and where each row and column contains at most two boxes.
Recursively define the boxes of a short ribbon $R$ as follows. For $k$ odd, let $\BBB_k = \BBB_{k-1}^\downarrow$, while for $k$ even let $\BBB_k =	\BBB_{k-1}^\leftarrow$. The short ribbon $R$ consists of all of these boxes $\BBB_k$ for $k \geq 0$.
	We consider two cases according to whether or not $\BBB_0$ lies on the main diagonal.
	
	\bigskip
\noindent
{\sf (Case 1: $\BBB_0$ is not on the main diagonal):}
Add the value $n$ to the box $\BBB_0$. We truncate $R$ to another short ribbon $S$ as follows. First, say $\BBB_1 \in S$ if $T(\BBB_1) \ni n$; otherwise $S$ is the empty ribbon. For each even $j \geq 2$, if $\BBB_{j-1} \in S$ and $|T(\BBB_{j-1})| = 1$ and $T(\BBB_j)  \ni (n-\frac{j}{2}+1)'$, include $\BBB_j \in S$.  For each odd $j \geq 3$, if $\BBB_{j-1} \in S$ and $|T(\BBB_{j-1})| = 1$ and $T(\BBB_j)  \ni n-\frac{(j-1)}{2}$, include $\BBB_j \in S$. 

For each box $\BBB_j \in S$, modify its filling as follows:
\begin{equation}\label{eq:box_shuffle}
    T'(\BBB_j) = \begin{cases}
	T(\BBB_j) \setminus \max T(\BBB_j), & \text{if } |T(\BBB_j)| > 1;\\
	k-1, & \text{if } T(\BBB_j) = k';\\
	k', & \text{if } T(\BBB_j) = k.
\end{cases}
\end{equation}
This operation is well-defined because $n \geq \ell(\lambda)$ by assumption, so the height of the ribbon $S$ is at most $n$, and $k$ cannot become non-positive before the ribbon terminates. Since the ribbon $S$ terminates as soon as it reaches a box with more than one entry, the first case in the definition \eqref{eq:box_shuffle} can only occur at most once; therefore $T'$ deletes at most one entry from $T$, and since we have added one $n$ in the newly added box, it follows that $d(T') \geq d(T)$, as desired. It remains to check that increasingness conditions are satisfied.

Suppose there is an increasingness violation in $T'$; since only the content of the ribbon $S$ was altered, the violation must involve some box of $S$. First, observe that the increasingness violation cannot be between two boxes of $S$ and that it cannot involve a box of $S$ with more than one entry. Let $j$ be the smallest index such that $\BBB_j$ is involved in an increasingness violation. If $j$ is odd, then the violation must be either between $\BBB_j$ and $\BBB_j^\downarrow$ or $\BBB_j$ and $\BBB_j^\rightarrow$. The latter is clearly not possible, since the content of $\BBB_j$ has been decreased from $T$ and these two boxes were increasing in $T$. A violation between $\BBB_j$ and $\BBB_j^\downarrow$ is not possible either, since if $j$ is odd then $T(\BBB_j)$ is, by assumption, unprimed, and so $\max(T(\BBB_j^\downarrow)) < T(\BBB_j)$, which implies $\max(T(\BBB_j^\downarrow)) \leq T'(\BBB_j)$.

Suppose instead then that $j$ is even. Then the only two violations could be between $\BBB_j$ and $\BBB_j^\leftarrow$ or $\BBB_j$ and $\BBB_j^\uparrow$. The latter is impossible, because we have lowered the content of $\BBB_j$ while holding $\BBB_j^\uparrow$ fixed. A violation between $\BBB_j$ and $\BBB_j^\leftarrow$ is also impossible, because $T(\BBB_j)$ is primed by assumption, so $T(\BBB_j^\leftarrow) < T(\BBB_j)$, which implies $T(\BBB_j^\leftarrow) \leq T'(\BBB_j)$.

\bigskip
\noindent
{\sf (Case 2: $\BBB_0$ is on the main diagonal):} 
Add the value $n$ to the box $\BBB_0$. Again, we truncate $R$ to another short ribbon $S$, but in a slightly different way. First, say $\BBB_1 \in S$ if $T(\BBB_1) \ni n$; otherwise $S$ is the empty ribbon. For each even $j \geq 2$, if $\BBB_{j-1} \in S$ and $|T(\BBB_{j-1})| = 1$ and $T(\BBB_j)  \ni n-\frac{j}{2}+1$, include $\BBB_j \in S$.  For each odd $j \geq 3$, if $\BBB_{j-1} \in S$ and $|T(\BBB_{j-1})| = 1$ and $T(\BBB_j)  \ni n-\frac{(j-1)}{2}$, include $\BBB_j \in S$. 

For each box $\BBB_j \in S$, modify its filling as follows:
\begin{equation}\label{eq:box_shuffle2}
   T'(\BBB_j) = \begin{cases}
	T(\BBB_j) \setminus \max T(\BBB_j), & \text{if } |T(\BBB_j)| > 1;\\
	T(\BBB_j) - 1, & \text{if } j \text{ is even};\\
	T(\BBB_j)', & \text{if $j$ is odd}.
\end{cases} 
\end{equation}
Exactly as above, this operation is well-defined and the tableau $T'$ has degree $d(T') \geq d(T)$. We now consider increasingness conditions.

Again as above, any possible increasingness violation must involve exactly one box of the ribbon $S$. Suppose $j$ is the smallest  index on such a box $\BBB_j$. If $j$ is odd, then the only violations can be between $\BBB_j$ and $\BBB_j^\downarrow$ or $\BBB_j$ and $\BBB_j^\rightarrow$. Again, the latter is impossible because we have only decreased the content of $\BBB_j$, and a violation between $\BBB_j$ and $\BBB_j^\downarrow$ is not possible either, since if $j$ is odd then $T(\BBB_j)$ is, by assumption, unprimed, and so $\max(T(\BBB_j^\downarrow)) < T(\BBB_j)$, which implies $\max(T(\BBB_j^\downarrow)) \leq T'(\BBB_j)$.

If on the other hand $j$ is even, then there cannot possibly be any increasingness violations, since $\BBB_j$ is on the main diagonal for even $j$. Any violation would have to involve either $\BBB_j^\leftarrow$ and $\BBB_j^\uparrow$, but neither of these boxes exist for a box on the main diagonal.

Therefore, in either case the tableau constructed by modifying the ribbon $S$ as above and leaving all other boxes unaltered is a $P$-shifted set-valued tableau $T'$ on $\lambda$ with degree greater than or equal to that of $T$, as desired.
\end{proof}

\begin{example}\label{ex:order_containment}
Let $n = 6$, and consider the $P$-shifted set-valued tableau 
\[
\begin{ytableau} 
\none & \none & \none & 5 & 56\\
\none & \none & 4 & 4 & 5' & 6\\
\none & 2 & 3' & 4' & 4 & 5' \\
1 & 12' &  3' & 3 & 3 & 4'4
\end{ytableau}
\]
of shape $\mu = (6,5,4,2)$.

Suppose we wish to construct a $P$-shifted set-valued tableau with greater or equal weight on $\lambda = (6,5,4,3) \supset \mu$. This is performed by the following sequence of steps:

\[
\begin{tikzpicture}
\ytableausetup{boxsize=.6cm}

\node (a) at (0,0) {
\begin{ytableau} 
\none & \none & \none & 5 & 56 & *(Cyan) 6\\
\none & \none & 4 & 4 & 5' & 6\\
\none & 2 & 3' & 4' & 4 & 5' \\
1 & 12' &  3' & 3 & 3 & 4'4
\end{ytableau}
};

\node (b) at (6,0){
\begin{ytableau} 
\none & \none & \none & 5 & 56 & *(Cyan) 6\\
\none & \none & 4 & 4 & 5' & *(red) 5'\\
\none & 2 & 3' & 4' & 4 & 5' \\
1 & 12' &  3' & 3 & 3 & 4'4
\end{ytableau}
};

\node (c) at (12,0) {
\begin{ytableau} 
\none & \none & \none & 5 & 56 & *(Cyan) 6\\
\none & \none & 4 & 4 & *(red) 4 & *(red) 5'\\
\none & 2 & 3' & 4' & 4 & 5' \\
1 & 12' &  3' & 3 & 3 & 4'4
\end{ytableau}
};
\node (d) at (12,-4) {
\begin{ytableau} 
\none & \none & \none & 5 & 56 & *(Cyan) 6\\
\none & \none & 4 & 4 & *(red) 4 & *(red) 5'\\
\none & 2 & 3' & 4' & *(red) 4' & 5' \\
1 & 12' &  3' & 3 & 3 & 4'4
\end{ytableau}
};
\node (e) at (6,-4) {
\begin{ytableau} 
\none & \none & \none & 5 & 56 & *(Cyan) 6\\
\none & \none & 4 & 4 & *(red) 4 & *(red) 5'\\
\none & 2 & 3' & *(red) 3 & *(red) 4' & 5' \\
1 & 12' &  3' & 3 & 3 & 4'4
\end{ytableau}
};
\node (f) at (0,-4) {
\begin{ytableau} 
\none & \none & \none & 5 & 56 & *(Cyan) 6\\
\none & \none & 4 & 4 & *(red) 4 & *(red) 5'\\
\none & 2 & 3' & *(red) 3 & *(red) 4' & 5' \\
1 & 12' &  3' & *(red) 3' & 3 & 4'4
\end{ytableau}
};
\node (g) at (6,-8) {
\begin{ytableau} 
\none & \none & \none & 5 & 56 & *(Cyan) 6\\
\none & \none & 4 & 4 & *(red) 4 & *(red) 5'\\
\none & 2 & 3' & *(red) 3 & *(red) 4' & 5' \\
1 & 12' & *(red) 2 & *(red) 3' & 3 & 4'4
\end{ytableau}
};

\draw[->] (a) -- node[midway,right] {} (b);
\draw[->] (b) -- node[midway,right] {} (c);
\draw[->] (c) -- node[midway,right] {} (d);
\draw[->] (d) -- node[midway,right] {} (e);
\draw[->] (e) -- node[midway,right] {} (f);
\draw[shorten > = -1cm,->] (f) -- node[midway,above left] {} (g);
\end{tikzpicture}.
\]
Here, the unique box $\BBB_0 \in \lambda \setminus \mu$ is shaded in blue \textcolor{Cyan}{$\blacksquare$}, while the boxes of the short ribbon $S$ are shaded in red \textcolor{red}{$\blacksquare$}. In this case, the short ribbon $R$ consists of all of the shaded boxes, either blue or red. In general, $R$ could contain further boxes below and left of the bottom of $S$.
\end{example}
\begin{lemma} \label{squish}
Let $T \in \PSVT(\lambda,n)$ be a $P$-shifted set-valued tableau and let $\Delta$ be the largest D-partition contained in $\lambda$. Then there exists a tableau $S \in \PSVT(\Delta,n)$ such that $d(T) = d(S)$.
\end{lemma}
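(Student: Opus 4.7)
The plan is induction on $|\lambda| - |\Delta|$. The base case $\lambda = \Delta$ is trivial: take $S = T$. For the inductive step, I would identify a box $\BBB \in \lambda \setminus \Delta$ at an outer corner of $\lambda$ (so that $\BBB^\uparrow, \BBB^\rightarrow \notin \lambda$), set $\lambda' \coloneqq \lambda \setminus \{\BBB\}$, and construct a tableau $T' \in \PSVT(\lambda', n)$ with $d(T') = d(T)$. Applying the inductive hypothesis to $T'$ then produces the desired $S \in \PSVT(\Delta, n)$. Such an outer corner lying in $\lambda \setminus \Delta$ exists whenever $\lambda \neq \Delta$: the skew shape $\lambda / \Delta$ is nonempty, and any outer corner of $\lambda / \Delta$ viewed as a skew shape must also be an outer corner of $\lambda$, since $\Delta$ being a partition forces any box of $\lambda$ sitting above or right of a box of $\lambda/\Delta$ to itself lie in $\lambda/\Delta$.

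To build $T'$, I would set $T'(\BBB^\downarrow) \coloneqq T(\BBB^\downarrow) \cup T(\BBB)$ and leave every other box of $\lambda'$ equal to its value in $T$; when $\BBB^\downarrow \notin \lambda'$ (forcing $\BBB$ to lie on the main diagonal), I would instead merge leftward. No entries are discarded, so $d(T') = d(T)$. Internal consistency of the merged box follows from the column inequality already enforced between $T(\BBB^\downarrow)$ and $T(\BBB)$ in $T$. Because $\BBB$ is an outer corner, the upper and right neighbors of $\BBB^\downarrow$ in $\lambda'$ are either absent from $\lambda$ entirely or reduce to the neighbor-of-$\BBB$ conditions already satisfied by $T$. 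The main-diagonal condition is preserved because $\BBB^\downarrow$ is never on the main diagonal, so absorbing any primed entries of $T(\BBB)$ into $\BBB^\downarrow$ is legal.

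The main obstacle I anticipate is the subcase when $\BBB^{\downarrow\rightarrow} \in \lambda$, which occurs precisely when $\lambda_{i-1} \geq \lambda_i + 2$ with $\BBB$ in row $i$: the new maximum $\max T'(\BBB^\downarrow) = \max T(\BBB)$ must then satisfy a row inequality with $T(\BBB^{\downarrow\rightarrow})$ that is not directly enforced by the original tableau. I would resolve this by choosing $\BBB$ more carefully: among outer corners of $\lambda / \Delta$, pick one $\BBB = (i, j)$ with $\lambda_{i-1} - \lambda_i = 1$, ensuring $\BBB^{\downarrow\rightarrow} \notin \lambda$ and hence a safe downward merge. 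The D-partition property of $\Delta$ is essential here, as the spacings $\Delta_{i-1} - \Delta_i \geq 2$ combined with $\Delta \subseteq \lambda$ force at least one row of $\lambda / \Delta$ with strict difference exactly $1$ to the row below, producing the required corner. A symmetric leftward-merge fallback handles residual subcases involving boxes adjacent to the main diagonal or at the bottom row of $\lambda$.
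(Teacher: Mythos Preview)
Your proposal has a genuine gap in the degree-preservation step. You write ``No entries are discarded, so $d(T') = d(T)$,'' but this is false: the column rule in Definition~\ref{ptabdef} permits $\max T(\BBB^\downarrow)$ to be a primed letter $k'$ with $k' = \min T(\BBB)$, and in that situation $T(\BBB^\downarrow) \cap T(\BBB) = \{k'\}$, so $|T(\BBB^\downarrow) \cup T(\BBB)| = |T(\BBB^\downarrow)| + |T(\BBB)| - 1$ and the merge drops the degree by one. (Your leftward-merge fallback has the mirror problem: the row rule allows $\max T(\BBB^\leftarrow)$ to be an unprimed $k$ equal to $\min T(\BBB)$.) Example~\ref{squishexample} in the paper illustrates exactly this: the bottom-right corner has $\mathsf{R}_0 = \{6'\}$ and $\mathsf{U}_1 = \{6',6\}$, whose union has only two elements although the boxes together contribute three to $d(T)$.

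This overlap is the entire difficulty of the lemma, and it is why a single box deletion does not suffice. The paper's proof addresses it by carrying the overlapping primed letter into the neighboring box $\mathsf{R}_1 = \mathsf{U}_1^\leftarrow$; but that in turn may create a column violation between $\mathsf{R}_1$ and $\mathsf{U}_2 = \mathsf{R}_1^\uparrow$, forcing a further merge, and so on along a short ribbon. Your inductive framework could be salvaged, but the single step $\lambda \to \lambda'$ must remove an entire ribbon's worth of boxes and track the propagating overlap, which is essentially the paper's argument.
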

\begin{proof}
If $\lambda = \Delta$, then there is nothing to show. Otherwise, $\lambda$ is not a D-partition, so there exists at least one row $k$ for which $\lambda_k-1 = \lambda_{k+1}$; choose the largest such $k$, and let $\mathsf{R}_0$ be the rightmost box in row $k$. By the choice of $k$, $\mathsf{R}_0^\uparrow$ exists. Consider the longest possible sequence of boxes
\[\mathsf{R}_0,\mathsf{U}_1,\mathsf{R}_1,\mathsf{U}_2, \mathsf{R}_2, \dots, \mathsf{U}_j, (\mathsf{R}_j)\]
such that $\mathsf{U}_i = \mathsf{R}_{i-1}^\uparrow$ and $\mathsf{R}_i = \mathsf{U}_i^\leftarrow$ (where the final box $\mathsf{R}_j$ may or may not exist). This sequence of boxes forms a \emph{short ribbon} in the sense of \cite{Thomas.Yong} and contains at least two boxes. Two nominally distinct cases can occur, based on whether the final box in the ribbon is an $\mathsf{R}$ or a $\mathsf{U}$:
\begin{equation}
(\textup{A}) \hspace{.2cm}
\ytableausetup{boxsize=1cm}
\begin{ytableau}
\mathsf{R}_j & \mathsf{U}_j\\
\none & \mathsf{R}_{j-1} & \mathsf{U}_{j-1}\\
\none & \none & \ddots & \ddots\\
\none & \none & \none & \ddots & \mathsf{U}_2\\
\none & \none & \none & \none & \mathsf{R}_1 & \mathsf{U}_1 \\
\none & \none & \none & \none & \none & \mathsf{R}_0
\end{ytableau} 
\hspace{-1cm} \textup{ or } \hspace{1cm} (\textup{B}) \hspace{.2cm}
\begin{ytableau}
\mathsf{U}_j\\
\mathsf{R}_{j-1} & \mathsf{U}_{j-1}\\
\none & \ddots & \ddots\\
\none & \none & \ddots & \mathsf{U}_2\\
\none & \none & \none & \mathsf{R}_1 & \mathsf{U}_1 \\
\none & \none & \none & \none & \mathsf{R}_0
\end{ytableau}
\end{equation}
Observe that, by choice of $k$, the boxes $\mathsf{U}_i^\uparrow$ and $\mathsf{U}_i^\rightarrow$ do not exist for any $i$. In case (A), where $\mathsf{R}_j$ exists, the box $\mathsf{R}_j^\uparrow$ does not exist. In case (B), the box $\mathsf{U}_j^\leftarrow$ does not exist.

We define a sequence of fillings (not necessarily $P$-shifted set-valued tableaux) recursively as follows. Let $\lambda^{(0)} = \lambda$ and $T^{(0)} = T$. For $i > 0$, let $\lambda^{(i)} = \lambda^{(i-1)} \setminus \mathsf{U}_i$. Take $T^{(i)}$ to be the filling of $\lambda^{(i)}$ defined by:
\begin{itemize}
\item{$T^{(i)}(\mathsf{R}_{i-1}) = T^{(i-1)}(\mathsf{R}_{i-1}) \cup T^{(i-1)}(\mathsf{U}_i)$},
\item{$T^{(i)}(\mathsf{R}_i) = T^{(i-1)}(\mathsf{R}_i) \cup \bigg( T^{(i-1)}(\mathsf{U}_i) \cap T^{(i-1)}(\mathsf{R}_{i-1}) \bigg)$},
\item{$T^{(i)}(\mathsf{B}) = T^{(i-1)}(\mathsf{B})$, for all other boxes $\mathsf{B}$.}
\end{itemize}
That is to say, at the $i$th step of the recursion, we delete the box $\mathsf{U}_i$ from the diagram, slide the contents of $\mathsf{U}_i$ down into $\mathsf{R}_{i-1}$, and to preserve degree, place any intersection between $T^{(i-1)}(\mathsf{U}_i)$ and $T^{(i-1)}(\mathsf{R}_{i-1})$ into $\mathsf{R}_i$. (One might be concerned that this description requires, in case (A), placing labels in $\mathsf{R}_j$ when that box does not exist; however, we will show that this does not occur.)
For example, the local configuration below changes in the following way:
\[
\begin{tikzpicture}
\ytableausetup{boxsize=1.65cm}

\node (a) at (0,0) {
\begin{ytableau}
{\scriptstyle T^{(i-1)}(\mathsf{U}_{i+1})} \\
T^{(i-1)}(\mathsf{R}_i) & T^{(i-1)}(\mathsf{U}_i)\\
\none & {\scriptstyle T^{(i-1)}(\mathsf{R}_{i-1})}
\end{ytableau}
};

\node (b) at (6,0) {
\begin{ytableau}
T^{(i)}(\mathsf{U}_{i+1}) \\
T^{(i)}(\mathsf{R}_i)\\
\none & T^{(i)}(\mathsf{R}_{i-1})
\end{ytableau}
}; 

\node (c) at (0,-5.5) {
\begin{ytableau}
\raisebox{-.4em}{\scalebox{1.8}{$45$}} \\
\raisebox{-.4em}{\scalebox{1.8}{$2'3$}} & \raisebox{-.4em}{\scalebox{1.8}{$6'7$}}\\
\none & \raisebox{-.4em}{\scalebox{1.8}{$6'$}}
\end{ytableau}
};

\node (d) at (6,-5.5) {
\begin{ytableau}
\raisebox{-.4em}{\scalebox{1.8}{$45$}} \\
\raisebox{-.4em}{{\scalebox{1.8}{$2'36'$}}}\\
\none & \raisebox{-.4em}{\scalebox{1.8}{$6'7$}}
\end{ytableau}
};

\draw[-{Stealth[width=6pt,length=4pt]}, line width=1pt](a) -- (b);
\draw[-{Stealth[width=6pt,length=4pt]}, line width=1pt](c) -- (d);
\end{tikzpicture}
\]
We make the following claims about the sequence $(\lambda^{(i)},T^{(i)})_{i=0}^j$.

\begin{claim}
The sequence $(\lambda^{(i)},T^{(i)})$ satisfies  
	\begin{enumerate}
		\item $\Delta \subseteq \lambda^{(j)} \subset \lambda^{(j-1)} \subset \dots \subset \lambda^{(1)} \subset \lambda^{(0)}$;
		\item if $T^{(i)}$ is not a $P$-shifted set-valued tableau, then the only violation is an increasingness violation that occurs between the boxes $\mathsf{R}_i$ and $\mathsf{U}_{i+1}$; 
		\item  $T^{(i-1)}(\mathsf{U}_i) \cap T^{(i-1)}(\mathsf{R}_{i-1})$ consists only of primed letters; and
\item $d(T^{(i)}) = d(T^{(i-1)})$.  
	\end{enumerate}
\end{claim}
\begin{proof}[Proof of the claim]
We prove the four statements in turn.
\begin{enumerate}
\item Each step from $\lambda^{(i)}$ to $\lambda^{(i+1)}$ replaces two consecutive rows of the partition $(r,r-1)$ with $(r,r-2)$; this cannot change the largest D-partition contained within the partition.
\item This follows inductively, the base case $i = 0$ being true by the assumption that $T^{(0)} = T$ is a $P$-shifted set-valued tableau. If $T^{(i-1)}$ is a valid tableau except for $\mathsf{U}_i$, then by removing the box $\mathsf{U}_i$ and only changing the filling by adding elements to the boxes $\mathsf{R}_i$ and $\mathsf{R}_{i-1}$ which are larger than all of the current contents, we can only introduce an increasingness violation involving these two boxes and a box above or to the right. $\mathsf{R}_{i-1}$ no longer has any boxes above or to the right in $\lambda^{(i)}$, and $\mathsf{R}_i$ has nothing to its right, so the only possible violation is between $\mathsf{R}_i$ and $\mathsf{U}_{i+1}$, as claimed.
\item This also follows by induction, the base case $i = 1$ following from the fact that $T$ is a valid tableau so the intersection in question is either empty or a single primed letter. By definition, $T^{(i-1)}(\mathsf{R}_{i-1})$ is $T(\mathsf{R}_{i-1}) \cup \bigg( T^{(i-2)}(\mathsf{U}_{i-1}) \cap T^{(i-2)}(\mathsf{R}_{i-2}) \bigg) $, which is by induction $T(\mathsf{R}_{i-1})$ together with (perhaps) some primed letters. We have $T^{i-1}(\mathsf{U}_i) = T(U_i)$, and since $T$ is a $P$-shifted set-valued tableau, the intersection $T(\mathsf{U}_i) \cap T(\mathsf{R}_{i-1})$ is either empty or a single primed letter. This proves this part of the claim.
\item If $T^{(i-1)}(\mathsf{U}_i) \cap T^{(i-1)}(\mathsf{R}_{i-1}) = \varnothing$, then this is trivial. Otherwise, $T^{(i-1)}(\mathsf{U}_{i}) \cap T^{(i-1)}(\mathsf{R}_{i-1})$ is a set of primed letters by Claim (3), and because the original filling $T$ is row-increasing, none of these primed letters are contained in $T^{(i-1)}(\mathsf{R}_i)$, so 
\[
|T^{(i)}(\mathsf{R}_i)| = |T^{(i-1)}(\mathsf{R}_i)| + |T^{(i-1)}(\mathsf{U}_i) \cap T^{(i-1)}(\mathsf{R}_{i-1})|
\]
to compensate for the fact that 
\[
|T^{(i)}(\mathsf{R}_{i-1})| = |T^{(i-1)}(\mathsf{R}_{i-1}|+|T^{(i-1)}(\mathsf{U}_i)|-|T^{(i-1)}(\mathsf{U}_i) \cap T^{(i-1)}(\mathsf{R}_{i-1})|,
\]
and the degrees balance. 
\end{enumerate}
This completes the proof of the claim.
\end{proof}

 Now, we deal with the concern that case (B) appears to define a filling for a box $\mathsf{R}_j$ that does not exist. However, it can be seen that, in case (B), the definition of $T^{(j)}(\mathsf{R}_j)$ will always be empty; since $\mathsf{U}_j$ is on the main diagonal, $T^{(j-1)}(\mathsf{U}_j) = T(\mathsf{U}_j)$ contains no primed entries, and so the intersection $T^{(j-1)}(\mathsf{U}_j) \cap T^{(j-1)}(\mathsf{R}_{j-1})$ is empty. Since $T(\mathsf{R}_j)$ also originally sits empty, the $T^{(j)}(\mathsf{R}_j)$ prescribed by the algorithm is empty, and this procedure is in fact well-defined.

It then follows from Claim (2) that $T^{(j)}$ is a $P$-shifted set-valued tableau on $\lambda^{(j)}$ (since $\mathsf{U}_{j+1}$ does not exist), from Claim (1) that $\Delta \subseteq \lambda^{(j)} \subset \lambda$, and from Claim (4) that $d(T^{(j)}) = d(T)$, and so the $P$-shifted set-valued tableau $T^{(j)}$ on the strict partition $\lambda^{(j)}$ suffices to establish the lemma.
\end{proof}

\begin{example} \label{squishexample}
Below, we trace through the algorithm of Lemma~\ref{squish} applied to a tableau of shape $7642$ to reduce to a tableau of D-partition shape $7531$. Boxes denoted $\mathsf{U}_i$ are shaded in red \textcolor{red}{$\blacksquare$}, while boxes denoted $\mathsf{R}_i$ are shaded in blue \textcolor{cyan}{$\blacksquare$}. 

\[
\ytableausetup{boxsize=.6cm}
\begin{tikzpicture}
\node (a) at (0,0) {
\begin{ytableau} 
\none & \none & \none & *(cyan) 5 & *(red) \genfrac{}{}{0pt}{}{6}{56'}\\
\none & \none & 4 & 4 & *(cyan) 4 & *(red) 56'\\
\none & 2 & 3' & 3 & 3 & *(cyan) 4'5' & *(red) 6'6\\
1 & 12' &  2 & 2 & 2 & 4' & *(cyan) 6'
\end{ytableau}
};
\node (b) at (7,0) {
\begin{ytableau} 
\none & \none & \none & *(cyan) 5 & *(red) \genfrac{}{}{0pt}{}{6}{56'}\\
\none & \none & 4 & 4 & *(cyan) 4 & *(red) 56'\\
\none & 2 & 3' & 3 & 3 & *(cyan)\genfrac{}{}{0pt}{}{6'}{4'5'}\\
1 & 12' &  2 & 2 & 2 & 4' & *(cyan) 6'6
\end{ytableau}
};
\node (c) at (7,-4) {
\begin{ytableau} 
\none & \none & \none & *(cyan) 5 & *(red) \genfrac{}{}{0pt}{}{6}{56'}\\
\none & \none & 4 & 4 & *(cyan) 46'\\
\none & 2 & 3' & 3 & 3 & *(cyan) \genfrac{}{}{0pt}{}{56'}{4'5'}\\
1 & 12' &  2 & 2 & 2 & 4' & *(cyan) 6'6
\end{ytableau}
};
\node (d) at (0,-4) {
\begin{ytableau} 
\none & \none & \none & *(cyan) 56'\\
\none & \none & 4 & 4 & *(cyan) \genfrac{}{}{0pt}{}{6'6}{45}\\
\none & 2 & 3' & 3 & 3 & *(cyan) \genfrac{}{}{0pt}{}{56'}{4'5'}\\
1 & 12' &  2 & 2 & 2 & 4' & *(cyan) 6'6
\end{ytableau}
};
\node (e) at (3.5,-8) {
\begin{ytableau} 
\none & \none & \none & 56\\
\none & \none & 4 & 4 & \genfrac{}{}{0pt}{}{6'6}{45}\\
\none & 2 & 3' & 3 & 3 & \genfrac{}{}{0pt}{}{56'}{4'5'}\\
1 & 12' &  2 & 2 & 2 & 4' & 6'6
\end{ytableau}
};

\draw[->] (a) -- node[midway,right] {} (b);
\draw[->] (b) -- node[midway,right] {} (c);
\draw[->] (c) -- node[midway,right] {} (d);
\draw[shorten > = -.5cm,->] (d) -- node[midway,right] {} (e);
\end{tikzpicture}
\]
\end{example}

\begin{definition} \label{maxtableau}
Let $\Delta$ be a D-partition of length $\ell$ and fix a natural number $n \geq \ell$. If $\Delta_\ell > 1$, we define the $P$-shifted set-valued tableau $\mathcal{M}_{\Delta,n}$ to be
\begin{equation}
\ytableausetup{boxsize=1cm}
\begin{ytableau}
\none & \none & \none & \none & \ell & \ell & \dots & [\ell,n]_\mathbb{S}\\
\none & \none & \none & \iddots & \vdots & \vdots & \vdots & \vdots & \ddots \\
\none & \none & \iddots & \iddots & \vdots & \vdots & \vdots & \vdots & \ddots & \ddots \\
\none 2 & 2 & 2 & 2 & 2 & 2 & 2 & 2 & 2 & \dots & [2,n]_\mathbb{S}\\
1 & 1 & 1 & 1 & 1 & 1 & 1 & 1 & 1 & \dots & 1 & [1,n]_\mathbb{S}
\end{ytableau}.
\end{equation}
That is, each box in row $i$ is filled with the value $i$, except the rightmost box in row $i$ which receives the interval $[i,n]_\mathbb{S}$.
If instead $\Delta_\ell = 1$, we define $\mathcal{M}_{\Delta,n}$ to be
\begin{equation}
\begin{ytableau}
\none & \none & \none & \none & \none & [\ell,n]\\
\none & \none & \none & \none & k & \cdots & k & [k,n]_\mathbb{S} \\
\none & \none & \none & \vdots & \vdots & \vdots & \vdots & \vdots & \ddots \\
\none & \none & \vdots & \vdots & \vdots & \vdots & \vdots & \vdots & \vdots & \ddots \\
\none 2 & 2 & 2 & 2 & 2 & 2 & 2 & 2 & 2 & \dots & [2,n]_\mathbb{S}\\
1 & 1 & 1 & 1 & 1 & 1 & 1 & 1 & 1 & \dots & 1 & [1,n]_\mathbb{S}
\end{ytableau},
\end{equation}
where $k = \ell - 1$.
That is, each box outside the top row is filled as before, while the unique box of the top row receives the integer interval $[\ell,n]$.
\end{definition}

\begin{lemma} \label{push}
For any D-partition $\Delta$ and any $n \in \mathbb{N}$, the tableau $\mathcal{M}_{\Delta,n}$ has maximum degree among all tableaux in $\PSVT(\Delta,n)$.
\end{lemma}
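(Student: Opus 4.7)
The plan is to split the proof into two parts: establishing that $\mathcal{M}_{\Delta,n}$ is a legitimate $P$-shifted set-valued tableau, and then showing no tableau in $\PSVT(\Delta,n)$ exceeds it in degree.

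For legality, the key structural fact I would exploit is that in the shifted diagram of a D-partition, the rightmost box of each row has no box directly above it: consecutive row lengths differ by at least $2$ while each row is shifted only one column to the right relative to the row below. Consequently, stuffing the rightmost box of row $i$ with the full interval $[i,n]_{\mathbb{S}}$ creates no column-increase violation, while the row condition is vacuous for that box. The entries $i$ filling the non-rightmost boxes of row $i$ satisfy the row condition trivially (equal adjacent unprimed values) and the column condition inductively. The no-primes-on-diagonal rule is immediate since every box on the main diagonal receives an unprimed integer, and in the $\Delta_\ell = 1$ case the top row's single box lies on the diagonal and is thus correctly given an integer-only interval $[\ell,n]$.

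For maximality, I would bound the degree of an arbitrary $T \in \PSVT(\Delta,n)$ row by row. Specifically, I aim to establish the per-row inequality
\[
\sum_{B \in \text{row } i} |T(B)| \;\leq\; \begin{cases} \Delta_i + 2(n-i), & \text{if } \Delta_i > 1, \\ n - i + 1, & \text{if } \Delta_i = 1, \end{cases}
\]
which exactly matches the contribution of $\mathcal{M}_{\Delta,n}$ to row $i$. The first ingredient is that all entries in row $i$ must lie in $[i,n]_{\mathbb{S}}$, which I would prove by induction on $i$: walking up each column via the column-increase rule, anchored by the absence of primes on the main diagonal, forces the minimum available value in row $i$ to be $i$. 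The second ingredient is that each primed value $k'$ appears at most once in any given row: if $k'$ occurred in two boxes of the same row, the row-increase rule applied to the rightmost of them would force a value strictly greater than $k'$ in the intermediate boxes, contradicting the reappearance.

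The main obstacle is controlling how many unprimed repetitions can occur alongside primed entries. I would analyze the weakly increasing sequence of box-maxes $v_1 \leq v_2 \leq \cdots \leq v_{\Delta_i}$ (strictly increasing after any primed max), bound $|T(B_j)|$ by the number of alphabet elements from $v_{j-1}$ to $v_j$ (inclusive of $v_{j-1}$ only when $v_{j-1}$ is unprimed), and sum telescopically to obtain the per-row bound, using crucially the constraint that all entries lie in $[i,n]_{\mathbb{S}}$ to bound $v_{\Delta_i} \leq n$. Summing these bounds over all $\ell$ rows and comparing against the explicit computation of $d(\mathcal{M}_{\Delta,n})$ gives the desired maximality. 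The $\Delta_\ell = 1$ case requires a small modification since the top row's box receives no primes, lowering its per-row bound from $2(n-\ell)+1$ to $n-\ell+1$, but this is precisely accounted for in the two cases of the definition of $\mathcal{M}_{\Delta,n}$.
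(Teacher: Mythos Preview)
Your approach is correct and genuinely different from the paper's. The paper proves the lemma constructively: starting from an arbitrary $T \in \PSVT(\Delta,n)$, it locates the lowest, leftmost box where $T$ disagrees with $\mathcal{M}_{\Delta,n}$ and applies a local ``push'' move (sliding content into the box to the right, or filling the rightmost box with the full interval) that weakly increases degree, iterating until $\mathcal{M}_{\Delta,n}$ is reached. Your argument instead establishes a direct row-by-row upper bound $\sum_{B \in \text{row } i} |T(B)| \leq \Delta_i + 2(n-i)$ via the telescoping observation that consecutive boxes overlap in at most one element and all entries in row $i$ lie in $[i,n]_{\mathbb{S}}$, then checks that $\mathcal{M}_{\Delta,n}$ saturates each of these bounds. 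Your route is arguably more elementary, since it avoids verifying that each local move preserves the tableau conditions (which is where most of the paper's case analysis goes); it also makes transparent that the D-partition hypothesis is used only for the \emph{legality} of $\mathcal{M}_{\Delta,n}$, not for the upper bound itself. The paper's approach, on the other hand, is more constructive and gives an explicit degree-nondecreasing path from any tableau to the maximizer. One small point worth tightening in your write-up: the claim that entries in row $i$ are all $\geq i$ is not literally obtained by ``walking up each column,'' but rather by walking up the staircase through the main-diagonal boxes (using that each diagonal box is unprimed to gain a full $+1$ at each step) and then pushing rightward along the row; your phrase ``anchored by the absence of primes on the main diagonal'' shows you have the right idea, but the inductive path should be made explicit.
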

\begin{proof}
For an arbitrary tableau $T \in \PSVT(\Delta,n)$, we will construct a finite sequence of tableaux $T = T_0,T_1 \dots, T_{j-1}, T_j = \mathcal{M}_{\Delta,n}$ such that $d(T_0) \leq d(T_1) \leq \dots \leq d(T_j)$.
If $T = \mathcal{M}_{\Delta,n}$, there is nothing to show, so we assume there exists at least one box $\mathsf{B} \in \Delta$ for which $T(\mathsf{B}) \not= \mathcal{M}_{\Delta,n}(\mathsf{B})$. Find the smallest index $i$ for which a box in row $i$ of $\Delta$ differs between $T$ and $\mathcal{M}_{\Delta,n}$, and find the leftmost box $\mathsf{B}_{\bad}$ in this row such that  $T(\mathsf{B}_{\bad}) \not= \mathcal{M}_{\Delta,n}(\mathsf{B}_{\bad})$. There are three nominally distinct cases to treat:

\bigskip
\noindent
{\sf (Case 1: $\mathsf{B}_{\bad}$ is not the rightmost box in row $i$):}
In this case, we define the tableau $T_1$ as follows:
\begin{itemize}
\item{$T_1(\mathsf{B}_{\bad}) = \{i\}$,
}
\item{$T_1(\mathsf{B}_{\bad}^\rightarrow) = T(\mathsf{B}_{\bad}) \cup T(\mathsf{B}_{\bad}^\rightarrow)$,
}
\item{$T_1(\mathsf{B}) = T(\mathsf{B})$, for all other boxes $\mathsf{B}$.}
\end{itemize}
The only two boxes which are different in $T_1$ and $T$ are $\mathsf{B}_{\bad}$ and $\mathsf{B}_{\bad}^\rightarrow$, so to confirm that $T_1$ remains a valid tableau we only need to check the following local region. 
\[
\begin{tikzpicture}
\ytableausetup{boxsize=1.5cm}

\node (m) at (0,0) {\begin{ytableau}
\none & T(\mathsf{B}_{\bad}^\uparrow) & T(\mathsf{B}_{\bad}^{\rightarrow\uparrow}) & \none\\
i & T(\mathsf{B}_{\bad}) & T(\mathsf{B}_{\bad}^\rightarrow) & T(\mathsf{B}_{\bad}^{\rightarrow\rightarrow})\\
\none & i-1 & i-1
\end{ytableau}};
\node (f2m) at (8,0) {\begin{ytableau}
\none & T_1(\mathsf{B}_{\bad}^\uparrow) & T_1(\mathsf{B}_{\bad}^{\rightarrow\uparrow}) & \none\\
i & i & T_1(\mathsf{B}_{\bad}^\rightarrow) & T_1(\mathsf{B}_{\bad}^{\rightarrow\rightarrow})\\
\none & i-1 & i-1
\end{ytableau}};
\draw[-{Stealth[width=6pt,length=4pt]}, line width=1pt](m) -- (f2m);\end{tikzpicture}
\]
Most increasingness conditions follow immediately from inspection and $T \in \PSVT(\Delta,n)$; we only need to remark that since we only add elements (weakly) less than the minimum of $T(\mathsf{B}_{\bad}^\rightarrow)$ to $\mathsf{B}_{\bad}^\rightarrow$, we cannot introduce a violation with the boxes $\mathsf{B}_{\bad}^{\rightarrow \uparrow}$ or $\mathsf{B}_{\bad}^{\rightarrow\rightarrow}$. To conclude that $d(T) \leq d(T_1)$, observe that $|T(\mathsf{B}_{\bad}) \cap T(\mathsf{B}_{\bad}^\rightarrow)| \leq 1$, and so
\[|\{i\}| + |T(\mathsf{B}_{\bad}) \cup T(\mathsf{B}_{\bad}^\rightarrow)| \geq 1+(|T(\mathsf{B}_{\bad})|+|T(\mathsf{B}_{\bad}^\rightarrow)|-1).\]
Thus, the total content of these two boxes in $T_1$ is weakly larger than in $T$.

\bigskip
\noindent
{\sf (Case 2: $\mathsf{B}_{\bad}$ is the rightmost box in row $i$ and does not lie on the main diagonal):}
 In this case, we set $T_1(\mathsf{B}_{\bad}) = [i,n]_\mathbb{S}$ and $T_1(\mathsf{B}) = T(\mathsf{B})$ for all other boxes $\mathsf{B}$. Since $\mathsf{B}_{\bad}$ is in row $i$, $\min(T(\mathsf{B}_{\bad})) \geq i$, so $T(\mathsf{B}_{\bad}) \subseteq [i,n]_\mathbb{S}$, and therefore $d(T) \leq d(T_1)$. The tableau $T_1$ is still a valid tableau, because by assumption $T_1(\mathsf{B}_{\bad}^\leftarrow) = \{i\}$ and $T_1(\mathsf{B}_{\bad}^\downarrow) = \{i-1\}$ (or is empty, if $i = 1$), and boxes in the other two directions do not exist. Since $T(\mathsf{B}_{\bad}) \subseteq [i,n]_\mathbb{S}$, it follows that $|T(\mathsf{B}_{\bad})| \leq |[i,n]_\mathbb{S}|$, and since this is the only box which changes between $T$ and $T_1$, we conclude $d(T) \leq d(T_1)$.

\bigskip
\noindent
{\sf (Case 3: $\mathsf{B}_{\bad}$ is the rightmost box in row $i$ and lies on the main diagonal):}
This can only happen if row $i$ consists of only the single box $\mathsf{B}_{\bad}$. In this case, we proceed identically as {\sf Case 2}, except that we are prohibited from having primed entries in $\mathsf{B}_{\bad}$, so we set $T_1(\mathsf{B}_{\bad}) = [i,n]$ instead, and as in {\sf Case 2}, we see that $T_1$ remains a valid tableau with $d(T) \leq d(T_1)$.

\bigskip
If $T_1 \not= \mathcal{M}_{\Delta,n}$, then we can produce another tableau $T_2$ by applying the same construction to $T_1$, and continue to produce a sequence of $P$-shifted set-valued tableaux $T_0,T_1,T_2,\dots$ with the property that $d(T_i) \leq d(T_{i+1})$ for all $i$. This sequence must be finite, because by construction the bad box of $T_{i+1}$ is either in the same row as the bad box of $T_i$ but strictly further right, or in a strictly higher row. The sequence will stop when we are unable to find any box of $T_j$ differing from $\mathcal{M}_{\Delta,n}$, that is, when $T_j = \mathcal{M}_{\Delta,n}$, which completes the proof of the lemma.
\end{proof}

Applying Lemmas~\ref{squish} and~\ref{push} reduces the computation of the degree of $\IN_{\lambda,n}$ to directly counting how many entries are in $\mathcal{M}_{\Delta,n}$, so we can now finish the proof.

\begin{proof}[Proof of Theorem 1.1]
Now, we complete the proof of Theorem~\ref{main} by combining the previous lemmas. Let $\Delta$ be the largest D-partition contained in $\lambda$ and let the last nonzero part of $\Delta$ be $\Delta_\ell$. By Lemma~\ref{lem:order_containment}, $d(\IN_{\Delta,n}) \leq d(\IN_{\lambda,n})$. By Lemma~\ref{squish}, we have $d(\IN_{\Delta,n}) \geq d(\IN_{\lambda,n})$. Therefore, $d(\IN_{\Delta,n}) = d(\IN_{\lambda,n})$. By Lemma \ref{push},  $d(\IN_{\Delta,n}) = d(\mathcal{M}_{\Delta,n})$. If $\Delta_\ell \not= 1$, then by direct inspection of the tableau $\mathcal{M}_{\Delta,n}$, we have
\[c(\mathcal{M}_{\Delta,n}) = (\Delta_1,\Delta_2+2,\dots,\Delta_\ell+2\ell,2\ell,2\ell,\dots,2\ell),\]
so that
\[d(\mathcal{M}_{\Delta,n}) = |\Delta|+\sum_{i = 1}^\ell 2i+2\ell(n-\ell) =  |\Delta| + 2n\ell - \ell^2-\ell.\]
Otherwise, $\Delta_\ell = 1$ and again by inspection we find
\[c(\mathcal{M}_{\Delta,n}) = (\Delta_1,\Delta_2+2,\dots,\Delta_{\ell-1}+2(\ell-1),\Delta_\ell+2\ell-2,2\ell-1,2\ell-1,\dots,2\ell-1),\]
so that
\[d(\mathcal{M}_{\Delta,n}) = |\Delta| + 2n\ell-\ell^2-n. \qedhere\]
\end{proof}

\begin{remark}
    Lemma~\ref{push} explicitly identifies an element of $\PSVT(\lambda,n)$ of maximum degree, when $\lambda$ is a D-partition. Through Lemmas~\ref{lem:order_containment} and~\ref{squish}, this is sufficient to allow us to determine the degree of $\IN_{\lambda,n}$ for general $\lambda$, but without identifying explicit elements of top degree outside of the D-partition case. It would be very interesting to find and describe such representatives.
\end{remark}

\section{Regularity of Pfaffian ideals} \label{regularity}
In this section, we relate the combinatorics of the previous section to commutative algebra. First, in Section~\ref{sec:MSV}, we recall the definition of matrix Schubert varieties, for which the combinatorics developed in Section~\ref{symdegrees} will provide a new regularity formula in the Grassmannian case, complementing those of \cite{RRR,Hafner,RRW,PSW}. Then, in Section~\ref{sec:ssMSV}, we recall skew-symmetric matrix Schubert varieties, and observe that they are Cohen--Macaulay. Finally, in Section~\ref{symplectic.grothendieck}, we recall symplectic Grothendieck polynomials and use them in combination with Theorem~\ref{main} to establish our main algebraic result, Theorem~\ref{regularity_theorem}.

\subsection{Matrix Schubert varieties}\label{sec:MSV}
For textbook treatments of the material in this section, see \cite{Fulton:book,Miller.Sturmfels}.

For a permutation $w \in \symm_n$, the \emph{permutation matrix} $P^w \in \Mat_n$ associated to $w$ is defined as the $n \times n$ matrix
\[P^w_{i,j} = \begin{cases}1, \text{ if } j = w(i);\\ 0, \text{ otherwise.}\end{cases}\]
For example, the permutation matrix $P^{52134}$ is
\[\begin{pmatrix}
0 & 0 & 0 & 0 & 1 \\
0 & 1 & 0 & 0 & 0 \\
1 & 0 & 0 & 0 & 0 \\
0 & 0 & 1 & 0 & 0 \\
0 & 0 & 0 & 1 & 0
\end{pmatrix}.\]
For any matrix $A \in \Mat_n$ and any subsets $I,J \subseteq [n]$,  we define $A_{IJ}$ to be the submatrix \[
\{A_{i,j}:(i,j) \in I \times J\}.
\]
In particular, the matrix $A_{[i][j]}$ is the principal $i \times j$ minor of $A$.

The \emph{rank matrix} $R^w$ of $w$ is the matrix defined by the condition that $R^w_{i,j}$ is the rank of the $i \times j$ principal minor $P^w_{[i][j]}$. For example, the rank matrix $R^{52134}$ is
\[\begin{pmatrix}
0 & 0 & 0 & 0 & 1 \\
0 & 1 & 1 & 1 & 2 \\
1 & 2 & 2 & 2 & 3 \\
1 & 2 & 3 & 3 & 4 \\
1 & 2 & 3 & 4 & 5
\end{pmatrix}.\]

The \emph{matrix Schubert variety} $X_w$ is the set of matrices 
\[X_w \coloneqq \{A \in \Mat_n: \rank A_{[i][j]} \leq R^w_{i,j}\},\]
which is indeed an affine algebraic variety since each inequality is equivalent to the polynomial condition that all $\left(R^w_{i,j}+1\right) \times \left(R^w_{i,j}+1\right)$ minors of $A_{[i][j]}$ vanish. These varieties, first introduced by W.~Fulton \cite{Fulton}, provide an affine model of for Schubert varieties, and have been of significant interest for many years (see, e.g., \cite{Escobar.Meszaros,Fink.Rajchgot.Sullivant,Hamaker.Pechenik.Weigandt,Hsiao,Klein.Weigandt,Knutson.Miller,Knutson.Miller.Yong,PSW}). Fulton \cite{Fulton} shows that the defining determinantal conditions generate a prime ideal $I_w$ of the ring $S \coloneqq \mathbb{C}[x_{i,j} : 1 \leq i,j \leq n]$. We are interested in the coordinate ring $S/I_w$.

A permutation $w \in \symm_n$ that fixes $n$ determines a permutation $w' \in \symm_{n-1}$. We routinely identify $w$ with $w'$ or write $w = w' \times 1$ for clarity. Everything we study is invariant under the transformation $w \mapsto w \times 1$; in particular, $S/I_w  \cong S/ I_{w'}$. 

A permutation $w \in \symm_n$ is \emph{Grassmannian} if there is at most one value $1 \leq i < n$ such that $w(i) > w(i+1)$. In Section~\ref{symdegrees}, we will be interested in those matrix Schubert varieties $X_w$ with $w$ Grassmannian. The \emph{code} of a permutation $w \in \symm_n$ is $\code(w) = (c_1, \dots, c_n)$, where $c_i$ is the number of integers $j$ with $j> i$ and $w(j) < w(i)$. Sorting the entries of $\code(w)$ into a partition yields the shape $\shape(w)$ of $w$. 

Permutations are uniquely determined by their codes. Given a nonempty partition $\lambda$ and a positive integer $n \geq \ell(\lambda)$, there is a unique Grassmannian permutation $w_\lambda$ with $w(n) > w(n+1)$ and $\shape(w) = \lambda$. To find this permutation, extend $\lambda$ to have length $n$ by appending the needed number of terminal $0$s and then reverse $\lambda$ to obtain a weakly increasing sequence. This sequence is uniquely the code of the desired permutation $w_\lambda$.

\subsection{Skew-symmetric matrix Schubert varieties}\label{sec:ssMSV}
Let $\SSM_{n}$ denote the variety of $n \times n$ skew-symmetric matrices, a linear subspace of $\Mat_n$. Our focus is on the \emph{skew-symmetric matrix Schubert varieties} $\SSX_W \coloneqq X_W \cap \SSM_{n}$ as studied by E.~Marberg and B.~Pawlowski 
in \cite{Marberg.Pawlowski:Grobner}. These varieties provide an affine model of orbit-closures for the action of the symplectic group $\Sp_n(\mathbb{C})$ on the flag variety $\flags_n$.

While $\SSX_w$ is defined for arbitrary permutations, this is not the appropriate generating set, as for example, $\SSX_{21} = \SSX_{12}$, since the diagonal entries of a skew-symmetric matrix are necessarily zero. Instead, in the symplectic context the relevant indexing family for cohomology classes is the set of \emph{fixed-point-free involutions}, which is the set
\[ \FPF_n \coloneqq \{z \in \symm_{n}: z^2 = 1 \text{ and } z(i) \not= i \text{ for } 1 \leq i \leq n\}.\]
Note this set is empty if $n$ is odd, so from this point forward we assume we are working with $2n \times 2n$ matrices and the symmetric group $\symm_{2n}$.

A second, subtler issue which arises is that the ideal of minors defining $\SSX_z$ is not generally prime. This differs in a significant way from the generic matrix setting, where it is established by \cite{Fulton} that the ideal of minors defining $X_w$ is always prime. Constructing the ideal $I(\SSX_z)$ requires some intricacies with Pfaffian polynomials which we cover below, closely following \cite{Marberg.Pawlowski:Grobner}, where one can look for full proofs and additional information.

Recall that a skew-symmetric matrix $A \in \SSM_{2n}$ carries an invariant polynomial known as the \emph{Pfaffian} $\pf(A)$ with the property that $\pf(A)^2 = \det(A)$. Formally, we define it as
\[\pf(A) \coloneqq \frac{1}{2^n n!}\sum_{\sigma \in \symm_{2n}}\text{sgn}(\sigma)\prod_{i = 1}^n a_{\sigma(2i-1),\sigma(2i)}.\]
For background on combinatorial appearances of Pfaffians, see \cite{Godsil}.

Fix $n$ and let $S = \mathbb{C}[x_{i,j} : 1 \leq j < i \leq 2n]$ be a polynomial ring in $\binom{2n}{2}$ independent indeterminates.

\begin{theorem} \cite{Marberg.Pawlowski:Grobner}
Let $z \in \FPF_{2n}$, and let $\mathcal{X}^{\mathrm{ss}}$ be the $2n \times 2n$ skew-symmetric matrix
\[\begin{pmatrix}
0 & -x_{2,1} & \dots & -x_{2n,1} \\
x_{2,1} & 0 & \dots & -x_{2n,2} \\
\vdots & \ddots & \ddots & \vdots \\
x_{2n,1} & \dots & \dots & 0
\end{pmatrix}.
\]
Then the radical ideal $I(\SSX_z) \subset S$ is
\[I(\SSX_z) = \langle \pf(\mathcal{X}_{UU}^{\mathrm{ss}}):\exists (i,j) \in (2n \times 2n), i \geq j, U \subseteq [i],|U \cap [j]| > R^z_{i,j} \rangle
.\]
\end{theorem}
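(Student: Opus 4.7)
The goal is to show that the Pfaffian ideal $J$ on the right-hand side coincides with the radical vanishing ideal $I(\SSX_z)$. I would tackle the two containments separately: $J \subseteq I(\SSX_z)$ by a direct rank argument, and the reverse by combining a set-theoretic equality of varieties with a Gr\"obner-theoretic proof that $J$ is radical.

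For $J \subseteq I(\SSX_z)$, fix $A \in \SSX_z$ and any prescribed generator $\pf(\mathcal{X}_{UU}^{\mathrm{ss}})$ witnessed by data $(i,j,U)$ as in the statement. Writing $V = U \cap [j]$, the $|V|$ columns of $A_{UU}$ indexed by $V$ are precisely the restriction to rows $U \subseteq [i]$ of $|V|$ of the columns of $A_{[i][j]}$. Since $\rank A_{[i][j]} \leq R^z_{i,j} < |V|$, these $|V|$ columns are linearly dependent, so $\rank A_{UU} < |U|$ and $\det A_{UU} = 0$. By skew-symmetry, $\pf(A_{UU})^2 = \det A_{UU}$, forcing $\pf(A_{UU}) = 0$ (with the standard convention that Pfaffians of odd-sized skew-symmetric matrices vanish identically).

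For the reverse inclusion, I would first establish a set-theoretic equality of the zero loci. The nontrivial direction: if $A \in \SSM_{2n}$ satisfies every prescribed Pfaffian relation, then $\rank A_{[i][j]} \leq R^z_{i,j}$ for all $(i,j)$. Contrapositively, any rank excess yields a nonsingular $(R^z_{i,j}+1) \times (R^z_{i,j}+1)$ submatrix inside $A_{[i][j]}$; after invoking skew-symmetry to symmetrize the row and column index sets, this produces a principal subwindow $A_{UU}$ with $U \subseteq [i]$, $|U \cap [j]| > R^z_{i,j}$, and $\pf(A_{UU}) \neq 0$. This step leans on the classical fact that rank stratifications of generic skew-symmetric matrices are cut out scheme-theoretically by principal Pfaffians, in the style of De Concini--Procesi and Herzog--Trung.

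The principal obstacle is proving $J$ is radical, after which the ideal equality follows immediately from Hilbert's Nullstellensatz together with the previous step. My plan is a Gr\"obner degeneration modelled on Knutson--Miller's treatment of ordinary matrix Schubert varieties: fix an antidiagonal term order on $S$, show the displayed Pfaffians form a Gr\"obner basis for $J$, and verify that the initial ideal is generated by squarefree monomials, whose radicality then transfers back to $J$. The technical heart is the S-pair analysis for the Pfaffian generators, which requires Pl\"ucker-style identities among Pfaffians of overlapping principal submatrices, combined with a Stanley--Reisner identification of the initial ideal with the face ideal of a simplicial complex built from the combinatorial data of $z$ (plausibly partial perfect matchings compatible with the rank array $R^z_{\bullet,\bullet}$). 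Verifying that S-polynomial reductions close up inside the given generating set is the single hardest step and is where the fixed-point-free involution combinatorics must do the decisive work.
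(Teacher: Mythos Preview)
The paper does not prove this theorem. It is quoted verbatim as a result of Marberg and Pawlowski \cite{Marberg.Pawlowski:Grobner}, with the citation attached to the theorem header, and no proof is supplied; the paper simply uses the statement as input to the subsequent regularity argument. So there is nothing in the paper to compare your proposal against.

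That said, your outline is broadly aligned with how the cited source actually proceeds: the Marberg--Pawlowski paper does establish radicality by exhibiting a Gr\"obner basis for a suitable term order and showing the initial ideal is squarefree (indeed, Stanley--Reisner for a shellable complex). Your containment $J \subseteq I(\SSX_z)$ via rank is fine. The reverse set-theoretic containment is where your sketch is thinnest: the ``symmetrize the row and column index sets'' step needs a genuine argument, since a nonsingular $(r{+}1)\times(r{+}1)$ minor of $A_{[i][j]}$ with row set $R$ and column set $C$ does not immediately yield a principal submatrix $A_{UU}$ with nonzero Pfaffian and the correct intersection bound $|U\cap[j]|>R^z_{i,j}$; one must carefully choose $U$ (typically $U=R\cup C$ together with a parity/rank analysis of the resulting principal block) and check that the resulting $U$ still satisfies $U\subseteq[i]$ and the intersection inequality. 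This is doable but is not a one-line consequence of the classical Pfaffian rank stratification for a \emph{single} principal window, because here the rank constraint lives on a non-principal rectangular corner $A_{[i][j]}$. If you intend to write this out in full, that bridge is the place to invest care.
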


\begin{theorem}\label{thm:CM}
	The coordinate ring $S / I(\SSX_z)$ of the variety $\SSX_z$ is Cohen--Macaulay.
\end{theorem}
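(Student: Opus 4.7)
The plan is to leverage the Gr\"obner degeneration established by Marberg and Pawlowski in \cite{Marberg.Pawlowski:Grobner} together with the standard principle that Cohen--Macaulayness transfers along flat degenerations. Concretely, Marberg and Pawlowski show that, for a certain ``pipe dream'' term order $<$, the displayed Pfaffian generators of $I(\SSX_z)$ form a Gr\"obner basis whose initial ideal $\mathrm{in}_<(I(\SSX_z))$ is squarefree. This yields a flat family degenerating $S/I(\SSX_z)$ to the Stanley--Reisner ring $\mathbb{C}[\Delta_z]$ of an explicit simplicial complex $\Delta_z$. Since Cohen--Macaulayness lifts from the initial ideal to the original (see, e.g., \cite{Miller.Sturmfels}), it suffices to prove that $\mathbb{C}[\Delta_z]$ is Cohen--Macaulay.

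To establish Cohen--Macaulayness of this Stanley--Reisner ring, the natural strategy is to show that $\Delta_z$ is shellable, in the spirit of Knutson--Miller's treatment of ordinary matrix Schubert varieties via subword complexes \cite{Knutson.Miller}. Marberg and Pawlowski describe the facets of $\Delta_z$ in terms of a symplectic analogue of pipe dreams; I would try to produce a lexicographic shelling by ordering these pipe dreams according to a canonical reading order on their boxes. Alternatively, one could aim for the stronger property of vertex-decomposability, which immediately implies shellability and hence Cohen--Macaulayness.

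The main obstacle is the shellability argument itself. Pfaffian ideals are notoriously subtler than their determinantal analogues: syzygies among Pfaffian generators produce cancellations that do not arise in the generic matrix setting, so the combinatorics of the minimal non-faces of $\Delta_z$ is genuinely more delicate than in the type A case. As a back-up route, I would try to identify $\SSX_z$ (at least for the FPF-vexillary $z$ relevant to Theorem~\ref{regularity_theorem}) with a mixed ladder Pfaffian variety in the sense of \cite{DeNegri.Gorla:liaison}, for which Cohen--Macaulayness is already known in the commutative algebra literature \cite{DeNegri.Gorla:invariants,Herzog.Trung}; the authors explicitly suggest that forthcoming work \cite{Escobar.Fink.Rajchgot.Woo} will supply precisely such an identification, giving an independent and perhaps cleaner route to the theorem in the vexillary case that is needed downstream.
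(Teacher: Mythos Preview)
Your approach is exactly the one the paper takes: degenerate $S/I(\SSX_z)$ via the Marberg--Pawlowski term order to a squarefree monomial ideal, observe that the associated Stanley--Reisner complex is shellable (hence Cohen--Macaulay), and then lift Cohen--Macaulayness back along the flat degeneration. The only discrepancy is that you frame the shellability of $\Delta_z$ as ``the main obstacle'' to be overcome by a new argument, whereas in fact this shellability is already established in \cite{Marberg.Pawlowski:Grobner} itself; the paper simply cites it, so no further work (lexicographic shelling, vertex-decomposability, or the ladder-Pfaffian detour) is needed.
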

\begin{proof}
	 Marberg--Pawlowski \cite{Marberg.Pawlowski:Grobner} show that there is a term order such than an initial ideal of $I(\SSX_z)$ is squarefree and that the corresponding Stanley--Reisner simplicial complex is shellable, so that for this term order $\textrm{in}(I(\SSX_z))$ is Cohen--Macauley (see, e.g., \cite[Theorem~5.13]{Ene.Herzog}). Thus it follows (see, e.g., \cite[Corollary~6.9]{Ene.Herzog}) that $S/I(\SSX_z)$ is Cohen--Macaulay.
\end{proof}

\begin{remark}
    E.~De~Negri and E.~Gorla \cite{DeNegri.Gorla:liaison} show that \emph{mixed ladder Pfaffian varieties} are arithmetically Cohen--Macaulay. We believe this is related to the vexillary case of Theorem~\ref{thm:CM}, but do not currently understand the the precise connection; forthcoming work of L.~Escobar, A.~Fink, J.~Rajchgot, and A.~Woo \cite{Escobar.Fink.Rajchgot.Woo} is expected to shed more light on this.
\end{remark}

\subsection{Symplectic Grothendieck polynomials and proof of Theorem~\ref{regularity_theorem}} \label{symplectic.grothendieck}

% We now recall the definition of \emph{Grothendieck polynomials} $\groth_w$ for $w \in \symm_n$.
    Let $\partial_i$ be the divided difference operator $\partial_i f \coloneqq \frac{f-s_if}{x_i-x_{i+1}}$, and let $\kpartial_i \coloneqq \partial_i(1 - x_{i+1})f)$. 
    % It can be checked that the operators $\kpartial_i$ satisfy the usual braid relations, 

The symplectic Grothendieck polynomials are, like ordinary Schubert polynomials, constructed from a top element by sequences of divided difference operators. They are defined as follows.

\begin{definition-theorem}[{\cite{Wyser.Yong, Marberg.Pawlowski:properties}}] \label{symplectic-polynomials} 
The \emph{symplectic Grothendieck polynomials} in $2n$ variables are the unique family of polynomials $\{\grothsp_z\}_{z \in \FPF_{2n}}$ satisfying
\begin{enumerate}
\item $\grothsp_{2n(2n-1)\dots321} = \prod_{1 \leq i < j \leq 2n-i}(x_i+x_j - x_ix_j)$; and
\item if $i+1 \not= z(i)$ and $i \not= z(i+1)$ and $z(i) > z(i+1)$, then $\grothsp_{s_izs_i} = \kpartial_i \grothsp_z$.
\end{enumerate}
\end{definition-theorem}

An involution $z \in \FPF_n$ has a \emph{symplectic code} \cite[\S 4.4]{Marberg.Pawlowski:properties} $\Spcode(z) = (c_1, \dots, c_n)$ where $c_i$ is the number of integers $j$ with 
\[z(i) > z(j) < i < j.\] Sorting the entries of $\Spcode(z)$ into a partition and then taking the transpose yields the \emph{symplectic shape} $\Spshape(z)$ of $z$ \cite[\S 4.4]{Marberg.Pawlowski:properties}.

It is shown in \cite[\S 4.2]{Marberg.Pawlowski:formulas} and \cite[Theorem~1.9]{Marberg:refinement}, that the stable limit 
\[\lim_{n \to \infty} \grothsp_{(21)^n \times z}\] 
exists (in a ring of power series) and is a finite sum of $\IN_\lambda$s. We say that $z \in \FPF_n$ is \emph{FPF-vexillary} if this stable limit is a single $\IN_\lambda$. (This differs slightly from the more usual definition, but is equivalent by \cite[Theorem~3.23]{Marberg.Scrimshaw}.) An explicit characterization of FPF-vexillary involutions in terms of pattern avoidance appears in \cite[Corollary~7.9]{Hamaker.Marberg.Pawlowski:JComb}; we do not recall it here as it is somewhat complicated and the details will not play a role in this paper.

% To any fixed-point-free involution $z$, there is associated an involution (possibly with fixed points) referred to as \emph{dearc}$(z)$. The definition of $\dearc$ is most easily understood visually; if one draws a crossing diagram of $z$ (draw example), then $\dearc(z)$ is the involution obtained by deleting any arc $i \rightarrow j$ which does not cross any arc which begins to the left of $i$ (draw example). Formally, this translates to

% \begin{definition}[\cite{Marberg.Pawlowski:properties}] \label{dearc}
% Let $z \in \FPF_{2n}$. Then
% \[\dearc(z)(i) = 
% \begin{cases}
% i, &\text{if } z(j) > \max(i,z(i)) \text{ for all } \min(i,z(i)) < j < \max(i,z(i)); \\ z(i), & \text{otherwise}.
% \end{cases}
% \]
% \end{definition}

If $z$ is FPF-vexillary with symplectic shape $\lambda = \Spshape(z)$, we have by \cite[Theorem~3.23]{Marberg.Scrimshaw}
\begin{equation}\label{eq:stable_limits}
\lim_{n \to \infty} \grothsp_{(21)^n \times z} = \IN_\lambda = \lim_{k \to \infty} \IN_\lambda(x_1, \dots, x_k). 
\end{equation}

We need to understand the nature of this limit in slightly more detail.
Given two polynomials $f,g \in S$ with real coefficients, we say $f \preceq g$ if, for each monomial $m$, we have
\begin{align*}
    [m]f \geq 0 &\Leftrightarrow [m]g \geq 0 \quad \text{and} \\
    |[m]f| &\leq  |[m]g|,
\end{align*}
where $[m]h$ denotes the coefficient on the monomial $m$ in $h$. Note that symplectic Grothendieck polynomials have real (and indeed integral) coefficients.

\begin{lemma}\label{lem:Hecke_atoms}
    For $z \in \FPF_{2n}$, we have $\grothsp_z \preceq \grothsp_{21 \times z}$.
\end{lemma}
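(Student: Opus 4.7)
The goal is to verify, monomial by monomial, that the coefficients of $\grothsp_z$ and $\grothsp_{21 \times z}$ have matching signs and that the former's absolute value is bounded by the latter's. My plan rests on the combinatorial formulas for symplectic Grothendieck polynomials developed by Marberg and Pawlowski, together with an explicit weight-preserving embedding of the relevant indexing combinatorial objects.

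For the sign compatibility, I would use that $\grothsp_z$ admits an expansion as a signed generating function (for instance, over FPF pipe dreams or FPF Hecke atoms) in which every contribution to the monomial $x^\alpha$ has the uniform sign $(-1)^{|\alpha| - \ell_{\mathrm{FPF}}(z)}$, as a consequence of the type-A sign rule for ordinary Grothendieck polynomials applied within such a formula. A direct computation from the inversion formula for FPF-length then shows $\ell_{\mathrm{FPF}}(21 \times z) = \ell_{\mathrm{FPF}}(z)$, since the two inversions involving the prepended transposition $21$ are exactly offset by the single new trivial adjacent swap in the length count. Hence $\grothsp_z$ and $\grothsp_{21 \times z}$ carry matching signs on every monomial where both coefficients are nonzero, giving the sign half of $\preceq$.

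For the absolute-value bound, I would construct an explicit weight-preserving injection from the combinatorial objects indexing $\grothsp_z$ into those indexing $\grothsp_{21 \times z}$. The natural candidate takes any FPF pipe dream $D$ for $z$ (on the appropriate $S_{2n}$-grid) and extends it canonically to an object $\widetilde{D}$ for $21 \times z$ (on the $S_{2n+2}$-grid) by inserting the two new rows and columns filled with the identity tile pattern corresponding to the prepended $21$. This extension introduces no new crossings, so the monomial weight is preserved exactly, and distinct $D$ produce distinct $\widetilde{D}$. Consequently, the absolute value of each coefficient in $\grothsp_z$ is bounded by that in $\grothsp_{21 \times z}$.

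The main obstacle is verifying that this canonical extension always yields a valid FPF pipe dream for $21 \times z$, that it is genuinely injective, and that no extra weight is picked up from the added rows and columns. This is essentially a combinatorial bookkeeping exercise within the chosen model, but must be done carefully. An alternative approach via a Hecke-atoms decomposition $\grothsp_z = \sum_a \groth_a$ would need to relate atoms of $z$ to atoms of $21 \times z$ and match up monomials via the type-A shift identity $\groth_{1 \times w}(x_1,x_2,\ldots) = \groth_w(x_2,x_3,\ldots)$; however, because this shift moves monomial support entirely off of $x_1$, reconciling it with the $\preceq$ comparison (which compares the same monomial in both polynomials) seems to require a second auxiliary embedding, and so the direct pipe-dream route looks cleaner.
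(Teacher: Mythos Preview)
Your overall strategy is sound and is in fact the same as the paper's, though you have talked yourself out of the cleaner formulation. The paper's proof is precisely the Hecke-atom route you sketch and then set aside: one observes that if $w$ is a Hecke atom for $z$ then $1^2 \times w$ is a Hecke atom for $21 \times z$, and then invokes the Marberg--Pawlowski formula (their Theorem~3.12) expressing $\grothsp_u$ in terms of Hecke atoms. The reason the variable-shift objection does not bite is that this formula is already at the monomial level: unpacked, it is a sum over pairs (FPF Hecke word $a$, compatible sequence $i$), and the injection sends $(a,i)$ to $(a+2,i)$. Shifting each letter of $a$ by $2$ only relaxes the constraint $i_j \le a_j$, so the \emph{same} compatible sequence $i$ remains valid and the monomial $x^i$ is unchanged. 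The ``second auxiliary embedding'' you worried about is exactly this, and it is automatic. Your instinct that one cannot simply compare $\groth_w$ with $\groth_{1^2\times w}$ termwise is correct (e.g.\ $\groth_{312}=x_1^2 \not\preceq x_2^2 = \groth_{1^2\times 312}$), but the formula is not being used at that coarser granularity.

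Where you should be careful is the same pitfall in your own pipe-dream phrasing. ``Inserting two new rows and columns filled with the identity tile pattern'' is ambiguous: if you insert new rows at the top and push the existing crosses down by two rows, you change every $x_i$ to $x_{i+2}$ and reproduce exactly the shift problem you flagged for the Hecke-atom route. The correct embedding keeps each cross in the \emph{same row} and only shifts it rightward in the column direction (equivalently, shifts the Hecke word letters by~$2$); since the monomial weight depends only on row indices, this is what preserves the weight. Once you specify the embedding this way, your argument and the paper's are literally the same computation at two levels of abstraction, and both the injectivity and the sign bookkeeping via $\hat\ell_{\mathrm{FPF}}(21\times z)=\hat\ell_{\mathrm{FPF}}(z)$ go through.
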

\begin{proof}
As defined in \cite[\S 3.3]{Marberg.Pawlowski:formulas}, a \emph{Hecke atom} for $z$ is a permutation $w$ such that $w$ acts on the fixed-point-free involution $2143\ldots(2n)(2n-1)$ according to an action studied in \cite{Rains.Vazirani} to produce $z$. This action is somewhat complicated, so we omit the details; however, it is easy to see from the definition that  
 if $w$ is a Hecke atom for $z$, then $1^2 \times w$ is a Hecke atom for $21 \times z$. We then have that, by \cite[Theorem~3.12]{Marberg.Pawlowski:formulas}, which gives a formula for the symplectic Grothendieck polynomial $\grothsp_u$ in terms of Hecke atoms of $u$, the lemma follows.
\end{proof}

\begin{corollary}\label{cor:poly_comparison}
    If $z \in \FPF_{2n}$ is FPF-vexillary and the last nonzero entry of $\Spcode(z)$ is in position $k$, then 
    \[
    \grothsp_z \preceq \IN_{\Spshape(z),k}.
    \]
    In particular, $\deg \grothsp_z \leq \deg \IN_{\Spshape(z),k}$.
\end{corollary}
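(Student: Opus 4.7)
The plan is to iterate Lemma~\ref{lem:Hecke_atoms} to produce a chain of $\preceq$-comparisons, pass to the stable limit guaranteed by \eqref{eq:stable_limits}, and then restrict to the correct number of variables. First, I would check the (immediate) fact that $\preceq$ is transitive: if $f \preceq g$ and $g \preceq h$, then each monomial coefficient satisfies $|[m]f| \leq |[m]g| \leq |[m]h|$ with consistent signs. Applying Lemma~\ref{lem:Hecke_atoms} to $(21)^j \times z$ for $j = 0, 1, \ldots, m-1$ and chaining gives $\grothsp_z \preceq \grothsp_{(21)^m \times z}$ for every $m \geq 1$.

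Next, I would fix a monomial $\mathbf{x}^\alpha$ in $x_1, x_2, \ldots$ and take $m \to \infty$. By \eqref{eq:stable_limits}, the coefficient $[\mathbf{x}^\alpha]\grothsp_{(21)^m \times z}$ stabilizes to $[\mathbf{x}^\alpha]\IN_{\Spshape(z)}$, which in turn equals $[\mathbf{x}^\alpha]\IN_{\Spshape(z), k}$ whenever $\mathbf{x}^\alpha$ involves only $x_1, \ldots, x_k$ (since $\IN_{\Spshape(z), k}$ is obtained from the stable limit by setting $x_{k+1} = x_{k+2} = \cdots = 0$, which does not change coefficients of monomials supported in the first $k$ variables). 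Passing to the limit in the uniform $\preceq$-bounds from the previous step then yields $|[\mathbf{x}^\alpha]\grothsp_z| \leq |[\mathbf{x}^\alpha]\IN_{\Spshape(z), k}|$ with sign agreement for every such $\mathbf{x}^\alpha$.

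To upgrade this to the full statement $\grothsp_z \preceq \IN_{\Spshape(z), k}$, I would need that $\grothsp_z$ lies in $\mathbb{Z}[x_1, \ldots, x_k]$, so that monomials $\mathbf{x}^\alpha$ involving some $x_j$ with $j > k$ automatically have $[\mathbf{x}^\alpha]\grothsp_z = 0$ and the inequality is vacuous there. This is the symplectic analogue of a well-known property of ordinary Grothendieck polynomials; I expect it to follow from the divided-difference recursion in Definition-Theorem~\ref{symplectic-polynomials} together with the fact that $\Spcode(z)$ vanishes beyond position $k$, or alternatively to be recorded in the Marberg--Pawlowski references already cited. This variable-restriction step is the main obstacle; the rest of the argument consists only of routine manipulations of $\preceq$ and of the stable limit already provided by \eqref{eq:stable_limits}. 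Granting it, the degree inequality $\deg \grothsp_z \leq \deg \IN_{\Spshape(z), k}$ is immediate, since any top-degree monomial of $\grothsp_z$ then has nonzero coefficient of at least the same absolute value in $\IN_{\Spshape(z), k}$.
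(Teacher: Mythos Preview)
Your argument is correct and matches the paper's approach: iterate Lemma~\ref{lem:Hecke_atoms}, pass to the stable limit \eqref{eq:stable_limits}, and restrict to the first $k$ variables. The variable-restriction step you flag as the main obstacle is exactly what the paper supplies by citing the involution pipe dream formulas of \cite{Hamaker.Marberg.Pawlowski:pipe}, which give $\grothsp_z \in \mathbb{C}[x_1,\dots,x_k]$ when $\Spcode(z)$ vanishes beyond position $k$.
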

\begin{proof}
Since the last nonzero entry of $\Spcode(z)$ is in position $k$, we have
$\grothsp_z \in \mathbb{C}[x_1, \dots, x_k]$ by the involution pipe dream formulas of \cite{Hamaker.Marberg.Pawlowski:pipe}.
    The corollary is then immediate from Lemma~\ref{lem:Hecke_atoms} combined with \eqref{eq:stable_limits}.
\end{proof}

% We say $z$ is \emph{FPF-Grassmannian} if there exists a sequence of positive integers $\phi_1 < \phi_2 < \dots < \phi_r$ such that $\dearc(z) = (\phi_1,n+1)(\phi_2,n+2)\dots(\phi_r,n+r)$ (should probably draw a third picture) In this case, $\lambda^{Sp}(z) \coloneqq (n-\phi_1,n-\phi_2,\dots,n-\phi_r)$ is a strict partition, and
% \begin{theorem}[\cite{Marberg.Pawlowski:properties}]
% \[\IN_{\lambda^{Sp}(z)}(x_1,\dots,x_{2n}) = \grothsp_{z}(x_1,\dots,x_{2n})\]
% \end{theorem}
% {\color{blue} (? is not equal, but is equal to stabilization; is this how this works? can't remember)}

Finally, let us recall the theorem connecting the ideals $I(\SSX_z)$ to our degree calculations:

\begin{theorem}[\cite{Marberg.Pawlowski:Grobner}]\label{thm:MPGrobner}
Let $z \in \FPF_{2n}$. Then
\[\grothsp_z(1-t,1-t,\dots,1-t) = \mathcal{K}(I(\SSX_z); t).\]
\end{theorem}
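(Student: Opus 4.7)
The plan is to prove this $\mathcal{K}$-polynomial identity by lifting to a multi-graded setting and matching the defining recursion of $\grothsp_z$ with the geometric structure of $\SSX_z$, with a Gröbner degeneration serving as the explicit combinatorial bridge. Both sides of the claimed equality are single-variable polynomials, so the first step is to promote them to a common multi-variable refinement and then recover the stated identity by specialization.

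First I would equip $S = \mathbb{C}[x_{i,j} : 1 \leq j < i \leq 2n]$ with the $\mathbb{Z}^{2n}$-grading induced by the conjugation action of the diagonal torus $T \subset \Sp_{2n}(\mathbb{C})$ on $\SSM_{2n}$. The ideal $I(\SSX_z)$ is $T$-stable (since $\SSX_z$ is an orbit closure for a Borel containing $T$), hence multi-graded, so it has a well-defined multi-graded $\mathcal{K}$-polynomial $\mathcal{K}^T(S/I(\SSX_z); x_1, \ldots, x_{2n})$. The ordinary $\mathcal{K}$-polynomial is recovered by collapsing the grading, i.e.\ setting each $x_i = 1-t$. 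It is therefore enough to prove the multi-graded identity
\[
\mathcal{K}^T(S/I(\SSX_z); x_1, \ldots, x_{2n}) \; = \; \grothsp_z(x_1,\dots,x_{2n}).
\]

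Second, I would verify this multi-graded identity by induction on the length of $z$ in FPF-Bruhat order, matching the recursive characterization in Definition-Theorem~\ref{symplectic-polynomials}. The base case is $z_0 = 2n(2n{-}1)\cdots 21$, where $\SSX_{z_0}$ cuts out a complete intersection whose defining Pfaffians are the single off-diagonal entries in a triangular region; a direct Koszul computation of its $\mathcal{K}$-polynomial yields exactly the product $\prod_{1\leq i<j\leq 2n-i}(x_i + x_j - x_i x_j)$. For the inductive step, suppose $z$ satisfies the hypothesis $z(i)>z(i+1)$, $i+1\neq z(i)$, $i\neq z(i+1)$. One shows geometrically that $\SSX_{s_i z s_i}$ is obtained from $\SSX_z$ by a controlled modification (a $\Sp$-equivariant analogue of the Knutson--Miller ``$\pi_i$-projection,'' reflecting the Wyser--Yong description of the action of $s_i$ on symmetric orbit closures), and that passing to the $\mathcal{K}$-class under this modification applies precisely the operator $\kpartial_i$. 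This is exactly the symplectic counterpart of the argument used in \cite{Knutson.Miller} for ordinary matrix Schubert varieties.

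Third, to make the inductive step computationally effective, I would use the Gröbner basis produced in \cite{Marberg.Pawlowski:Grobner} (which we already invoked in the proof of Theorem~\ref{thm:CM}) to degenerate $I(\SSX_z)$ to its squarefree monomial initial ideal $\mathrm{in}(I(\SSX_z))$ with shellable Stanley--Reisner complex $\Delta_z$. Because flat degeneration preserves (multi-graded) Hilbert series,
\[
\mathcal{K}^T(S/I(\SSX_z)) \; = \; \mathcal{K}^T(S/\mathrm{in}(I(\SSX_z)))
\]
can be expanded combinatorially over the faces of $\Delta_z$. One then needs to identify this facet sum with the specialization of the involution pipe dream formula for $\grothsp_z$ from \cite{Hamaker.Marberg.Pawlowski:pipe}, which expresses $\grothsp_z(x_1,\dots,x_{2n})$ as a signed sum of monomials $\prod (1 - (1 - x_{i_a}))$ indexed by reduced and non-reduced FPF-pipe-dreams for $z$. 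This matching provides the concrete correspondence underlying the inductive step, and also gives an independent verification when $z$ is FPF-vexillary, since in that case the pipe dream formula has an especially transparent structure.

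The main obstacle will be the inductive/geometric step: establishing that the $T$-equivariant $\mathcal{K}$-class of $\SSX_z$ transforms under $\kpartial_i$ when passing from $z$ to $s_i z s_i$. In the ordinary matrix Schubert setting the analogous identity uses the existence of a smooth $\mathbb{P}^1$-bundle structure relating $X_w$ and $X_{ws_i}$ (and the projection formula in $K$-theory); for skew-symmetric matrices the orbit-closure geometry of $\Sp_{2n}(\mathbb{C})$-actions on $\flags_{2n}$ is less clean because orbits need not be indexed by length-graded posets in the same way and because the divided difference $\kpartial_i$ corresponds to a conjugation action rather than a one-sided multiplication. Once this geometric identity is in place, the rest of the argument is a formal induction that terminates at the base case identified above, and the single-variable identity $\grothsp_z(1-t,\dots,1-t) = \mathcal{K}(I(\SSX_z);t)$ follows by specialization.
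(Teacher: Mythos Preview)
The paper does not give its own proof of this statement: it is quoted as a result of Marberg and Pawlowski \cite{Marberg.Pawlowski:Grobner}, stated with attribution and used as a black box in the derivation of Theorem~\ref{regularity_theorem}. So there is no ``paper's proof'' to compare against; the authors simply import the identity.

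Your outline is a plausible sketch of how such an identity is established, in the spirit of the Knutson--Miller program, and indeed the actual argument in \cite{Marberg.Pawlowski:Grobner} does pass through the multigraded $\mathcal{K}$-polynomial and a Gr\"obner/pipe-dream computation. A couple of points would need tightening if you were to carry this out. First, the relevant torus is the diagonal torus of $\GL_{2n}$ acting by conjugation on $\SSM_{2n}$, not a torus inside $\Sp_{2n}$; the grading on $x_{i,j}$ is $e_i + e_j$, and this is what makes the product $\prod (x_i + x_j - x_i x_j)$ appear in the base case. Second, the inductive step you flag as the main obstacle is exactly where the real work lies, and it is not settled by the Gr\"obner degeneration alone: one needs either a geometric $\kpartial_i$-compatibility statement for the $K$-classes (which is delicate for conjugation actions, as you note) or a direct combinatorial bijection at the level of FPF pipe dreams showing that the face sum for $\Delta_{s_i z s_i}$ is $\kpartial_i$ applied to that for $\Delta_z$. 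Your third paragraph gestures at this but does not supply it, so as written the proposal is an outline with the crucial step left open rather than a proof.
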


We may now conclude the main theorem of this section, which is the precise version of Theorem~\ref{regularity_theorem}.
\begin{theorem}
Let $z \in \FPF_{2n}$ be an $FPF$-vexillary fixed-point-free involution with last nonzero entry of $\Spcode(z)$ in position $k$. Further let $\Delta \subseteq \Spshape(z)$ be the largest D-partition contained in $\Spshape(z)$. Then
\[\reg S/I(\SSX_z) \leq \begin{cases}
 	2k\ell-\ell^2-\ell - \big(|\Spshape(z)| - |\Delta|\big), & \text{if } \Delta_\ell > 1; \\
 	2k\ell-\ell^2-k  - \big(|\Spshape(z)| - |\Delta|\big), & \text{if }  \Delta_\ell = 1.
 \end{cases}\]
\end{theorem}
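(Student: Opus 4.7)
The plan is to bootstrap Theorem~\ref{main} to the skew-symmetric setting via the algebraic framework of Section~\ref{algebraic.background}. Since Theorem~\ref{thm:CM} establishes that $S/I(\SSX_z)$ is Cohen--Macaulay, Lemma~\ref{cohenmacreg} yields
\[\reg S/I(\SSX_z) = \deg_t \mathcal{K}(S/I(\SSX_z);t) - \height I(\SSX_z),\]
so the task splits into bounding the $\mathcal{K}$-polynomial degree from above and computing the height exactly.

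For the first piece, I would use Theorem~\ref{thm:MPGrobner} (combined with the elementary relation between $\mathcal{K}(I;t)$ and $\mathcal{K}(S/I;t)$ arising from the short exact sequence $0 \to I \to S \to S/I \to 0$) to identify the relevant $\mathcal{K}$-polynomial with the specialization $\grothsp_z(1-t,\dots,1-t)$. Next, I would verify that this substitution preserves the total degree: the top element formula of Definition-Theorem~\ref{symplectic-polynomials}(1) assigns every monomial of total degree $d$ the sign $(-1)^{d - n}$ for a fixed constant, and the isobaric divided differences $\kpartial_i$ preserve this sign-by-degree pattern. Consequently, every monomial of $\grothsp_z$ of degree $d$ carries sign $(-1)^{d - c}$ for some constant $c$ depending only on $z$, so at the top $t$-power all degree-maximizing monomials of $\grothsp_z$ contribute to the leading coefficient of $\grothsp_z(1-t,\dots,1-t)$ with matching sign. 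This precludes cancellation, giving $\deg_t \grothsp_z(1-t,\dots,1-t) = \deg \grothsp_z$. Combining Corollary~\ref{cor:poly_comparison} with Theorem~\ref{main} then yields
\[\deg_t \mathcal{K}(S/I(\SSX_z);t) \;\leq\; \deg \IN_{\Spshape(z),k} = \begin{cases} |\Delta| + 2k\ell - \ell^2 - \ell, & \text{if }\Delta_\ell > 1,\\ |\Delta| + 2k\ell - \ell^2 - k, & \text{if }\Delta_\ell = 1.\end{cases}\]

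For the height, I would argue that $\height I(\SSX_z) = |\Spshape(z)|$. Since $S/I(\SSX_z)$ is Cohen--Macaulay, the height of $I(\SSX_z)$ equals the codimension of $\SSX_z$ inside the ambient affine space of skew-symmetric matrices, and work of Marberg--Pawlowski identifies this codimension with the fpf-involution length of $z$, which in turn coincides with $|\Spshape(z)|$ by the conventions of \cite{Marberg.Pawlowski:properties}. Substituting both ingredients into the regularity identity and rearranging then immediately yields the desired upper bound, since $\deg \IN_{\Spshape(z),k} - |\Spshape(z)|$ rearranges to $2k\ell - \ell^2 - \ell - (|\Spshape(z)| - |\Delta|)$ in the first case and analogously in the second.

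The main obstacle I expect is the degree-preservation claim for the substitution $x_i \mapsto 1-t$: while the sign-by-degree pattern strongly suggests no cancellation, a fully rigorous justification requires either tracking the pattern through the $\kpartial_i$ recursion or, alternatively, appealing to the Hecke atoms formula used in the proof of Lemma~\ref{lem:Hecke_atoms}, which writes $\grothsp_z$ as a sum of ordinary Grothendieck polynomials, each of which already satisfies the sign convention. The height identification and the final arithmetic matching are comparatively routine.
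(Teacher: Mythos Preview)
Your proposal is correct and follows essentially the same route as the paper: apply Cohen--Macaulayness (Theorem~\ref{thm:CM}) and Lemma~\ref{cohenmacreg}, identify the $\mathcal{K}$-polynomial degree with $\deg \grothsp_z$ via Theorem~\ref{thm:MPGrobner}, bound this by $\deg \IN_{\Spshape(z),k}$ using Corollary~\ref{cor:poly_comparison} and Theorem~\ref{main}, and compute $\height I(\SSX_z) = |\Spshape(z)|$. The paper simply asserts $\deg \grothsp_z = \deg \mathcal{K}(I(\SSX_z);t)$ without comment, whereas you correctly flag and address the potential cancellation issue in the specialization $x_i \mapsto 1-t$; your suggested fix via the Hecke-atom expansion into ordinary Grothendieck polynomials (each of which has the requisite sign-by-degree property) is the cleanest way to justify this, and is a genuine clarification of a step the paper leaves implicit.
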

\begin{proof}
    Since $\SSX_z$ is Cohen--Macaulay by Theorem~\ref{thm:CM}, we have that Theorem~\ref{cohenmacreg} applies. Hence $\reg S/I(\SSX_z) = \deg \mathcal{K}(I(\SSX_z); t) - \height I(\SSX_z)$. 
    By Theorem~\ref{thm:MPGrobner}, $\deg \grothsp_z = \deg \mathcal{K}(I(\SSX_z); t).$ 
    We have $\height I(\SSX_z) = |\Spshape(z)|$ (this follows from example from combining \cite[Theorem~1.2]{Hamaker.Marberg.Pawlowski:JComb} with \cite[Theorem~1.2]{Hamaker.Marberg.Pawlowski:pipe}). By Corollary~\ref{cor:poly_comparison}, $\deg \grothsp_z \leq \deg \IN_{\Spshape(z),k}$. From Theorem~\ref{main} we have that 
    \[
    \deg(\IN_{\lambda,k}) = \begin{cases}
 	|\Delta| + 2k\ell-\ell^2-\ell, & \text{if } \Delta_\ell > 1; \\
 	|\Delta|+2k\ell-\ell^2-k, & \text{if }  \Delta_\ell = 1.
 \end{cases}
    \]
    Thus the theorem follows.
\end{proof}

% Combined with the machinery set up in subsection \ref{algebraic.background}, the $P$-Grothendieck polynomials are the $K$-polynomials of $\SSX_z$ when $z$ is $FPF$-Grassmannian.
% %{\color{blue}(\cite{Marberg.Pawlowski:Grobner} does not say anything about CMness directly, but they do tell me the Stanley-Reisner complex of the initial ideal is shellable. Knutson/Miller seem to imply(?) this implies Cohen--Macauleyness, and there are some theorems I don't understand that I think say this is true.)} 
 
% Therefore the formula established in Section~\ref{degrees} yields a compact formula for the regularities of these Pfaffian ideals  $I(\SSX_z)$ corresponding to Grassmannian $FPF$-involutions:

\section{Degrees of symmetric Grothendieck polynomials} \label{symdegrees}

In this section, we obtain a formula for the degree of (type A) symmetric Grothendieck polynomials, complementary to that of \cite{RRR}. Our formula differs in appearance from the formula of \cite{RRR}, and neither formula appears to follow directly from the other.  (However, see Proposition~\ref{prop:anna_sez} and surrounding discussion for tentative relations to the more general formula of \cite{PSW}.) Our approach parallels the shifted arguments of the previous sections, with an analogous collection of lemmas.  The proofs of the analogues of Lemmas~\ref{lem:order_containment} and~\ref{squish} are somewhat easier than in the shifted setting because two vertically adjacent boxes in an ordinary set-valued tableau always have disjoint content. The proof of Lemma~\ref{sympush} is nearly verbatim that of Lemma~\ref{push}. 

\begin{lemma}\label{symordercontainment}
Suppose $\mu \subseteq \lambda$ are partitions and that $n \geq \ell(\lambda)$. If $T \in \SVT(\mu, n)$ is a set-valued tableau, then there exists some $T' \in \SVT(\lambda,n)$ with $d(T') \geq d(T)$.
\end{lemma}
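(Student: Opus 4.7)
The plan is to mimic the proof of Lemma~\ref{lem:order_containment}, but in a substantially simplified form thanks to the observation noted in the excerpt: vertically adjacent boxes in an ordinary set-valued tableau have disjoint content. First I reduce by induction to the case $|\lambda|-|\mu|=1$: iteratively picking $r$ to be the smallest row index with $\mu_r<\lambda_r$ guarantees that $\mu \cup \{(r,\mu_r+1)\}$ is still a partition contained in $\lambda$, so the general case chains together. The task thus becomes extending a tableau $T \in \SVT(\mu,n)$ to a tableau $T' \in \SVT(\mu',n)$ with $d(T') \geq d(T)$, where $\mu' = \mu \cup \{\BBB_0\}$ and $\BBB_0 = (r,\mu_r+1)$.

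I first attempt to set $T'(\BBB_0) = \{n\}$ while leaving all other boxes unchanged. The partition-minimality of $r$ forces $\BBB_0^\uparrow,\BBB_0^\rightarrow \notin \mu'$, so the only neighbors to check are $\BBB_0^\leftarrow$ (row condition, automatic since entries lie in $[1,n]$) and $\BBB_0^\downarrow$ (column condition $\max T(\BBB_0^\downarrow) < n$). If the column condition already holds, $T'$ is valid with $d(T')=d(T)+1$. Otherwise $n \in T(\BBB_0^\downarrow)$, and I initiate a cascade straight down column $\mu_r+1$: recursively define $\BBB_k = (r-k,\mu_r+1)$, and at step $k\geq 1$, if $n-k+1 \notin T(\BBB_k)$ stop with no change; if $n-k+1 \in T(\BBB_k)$ and $|T(\BBB_k)|>1$, remove $n-k+1$ from $T(\BBB_k)$ and stop; if $T(\BBB_k) = \{n-k+1\}$, replace it by $\{n-k\}$ and continue to step $k+1$. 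The hypothesis $n \geq \ell(\lambda) \geq r$ bounds $k \leq r-1$, so $n-k \geq 1$ and the cascade is well-defined.

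The only delicate verification is the row condition $\max T(\BBB_k^\leftarrow) \leq n-k$ whenever the cascade performs a singleton replacement at step $k$, and this is precisely where disjoint content does the work. Along column $\mu_r$, strict column-increase together with integer values forces the maxes $a_j \coloneqq \max T((r-j,\mu_r))$ to satisfy $a_j \leq a_{j-1}-1$, so $a_k \leq a_0 - k \leq n-k$ by induction. The remaining neighbor conditions are routine: within the cascade, column-above conditions reduce to $n-k<n-k+1$; column-below conditions are either unchanged (when the cascade stops without modification) or deferred to the next cascade step; row-right conditions hold because the original inequality $\max T(\BBB_k) \leq \min T(\BBB_k^\rightarrow)$ already forces $\min T(\BBB_k^\rightarrow) \geq n-k+1 > n-k$. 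Finally, the cascade changes degree by $+1$ (the new entry $n$ in $\BBB_0$) minus at most $1$ (at most one entry is ever removed, and only if the cascade terminates by removing a max), giving $d(T') \geq d(T)$. The main technical work lies in cataloguing these neighbor conditions at each cascade step, but once the left-neighbor bound is in place, the rest is mechanical.
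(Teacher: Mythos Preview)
Your proposal is correct and follows essentially the same approach as the paper: reduce to a single added box, place $n$ there, and cascade down the column with singleton decrements or max-removals. The only notable difference is in verifying the left-neighbor row condition: you give a direct bound $a_k \leq a_0 - k \leq n-k$ using strict column-increase in the adjacent column, whereas the paper argues by contradiction on a minimal violating index; both are short and either works.
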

\begin{proof}
It suffices to assume that $|\lambda| - |\mu| = 1$. Let the unique box of $\lambda \setminus \mu$ be $\BBB_0$. To define the filling $T'$, we first set $T'(\BBB_0) = n$. 

Let the boxes of the column of $\BBB_0$ be $\BBB_0, \BBB_1, \dots$ from top to bottom.
Next, we define a partial column $\CCC$ as follows. If $n \not \in \BBB_1$, then $\CCC$ is empty. Otherwise, recursively include $\BBB_i$ in $\CCC$ if 
\begin{itemize}
	\item $\BBB_{i-1} \in \CCC$,
	\item $|T(\BBB_{i-1})| = 1$,
	\item  and $n-i+1 \in T(\BBB_{i})$.  
\end{itemize}
If $\BBB_i \in \CCC$, we also refer to it as $\CCC_i$.

We then define the filling $T'$ on the partial column $\CCC$ as
\begin{equation}\label{eq:relabel}
T'(\CCC_j) = \begin{cases}
	T(\CCC_j) \setminus \max T(\CCC_j), & \text{if } |T(\CCC_j)| > 1;\\
	T(\CCC_j) - 1, & \text{if } |T(\CCC_j)| = 1. 
\end{cases}
\end{equation}
Since $n \geq \ell(\lambda)$, Equation~\eqref{eq:relabel} is well-defined.
Finally, if $\mathsf{A} \neq \BBB_0$ and $\mathsf{A} \not \in \CCC$, then we define $T'(\mathsf{A}) = T(\mathsf{A})$. We claim that this tableau $T'$ has the desired properties.

Since we have added an entry to the new box $\BBB_0$ and the filling $T'$ deletes at most one entry from $T$ (the top case of Equation~\eqref{eq:relabel} can occur at most once), it follows that either $d(T') = d(T)$ or $d(T') = d(T)+1$. In particular, $d(T') \geq d(T)$. 

It only remains to check that $T'$ satisfies the necessary increasingness conditions.
First, consider $\BBB_0$. Since $\BBB_0$ is the rightmost box in its row and $n$ is the largest letter of the alphabet, there can be no row increasingness violations with $\BBB_0$. Furthermore, if $n \in T(\CCC_1)$, then by the construction of the filling, $n \not \in T'(\CCC_1)$, so $T'$ has no violation of increasingness between boxes $\BBB_0$ and $\CCC_1$.

Since the partial column $\CCC$ is column-increasing in $T$, by construction it is also column-increasing in $T'$. Similarly, there can be no column-increasingness violation in $T'$ between the bottom box of $\CCC$ and the box directly below it.
%For any two boxes $\CCC_j,\CCC_{j+1}$, we have $\min(T(\CCC_j)) \geq \max(T(\CCC_{j+1}))$. Since $\CCC_j$ is not the final box in the column, $|T(\CCC_j)| = 1$, so $T'(\CCC_j) = T(\CCC_j) - 1$, and therefore from $\max(T'(\CCC_{j+1})) < \max(T(\CCC_{j+1})) \leq T(\CCC_j)$ it follows that $\max(T'(\CCC_{j+1})) \leq T'(\CCC_j)$.

Since we have only decreased the values in the boxes in $\CCC$, we cannot have introduced any increasingness violations between a box in $\CCC$ and the box to its right. We then need only check if we introduced a violation between $\CCC$ and boxes to its left. We show by contradiction that this is impossible. Suppose $\CCC_k$ is the smallest $k$ such that $\max(\CCC_k^\leftarrow) > \min(\CCC_k)$. The box $\CCC_k$ must have had only a single entry, since otherwise its content only weakly decreases. Therefore, letting $a = T(\CCC_k) = T'(\CCC_k) + 1$, this implies $a \in \CCC_k^\leftarrow$. Since $T$ was supposed to be column-increasing, this forces $\min(T(\CCC_k^{\uparrow\leftarrow})) > a$. This, however, cannot occur, since $T'(\CCC_{k-1}) = a$, and as $\CCC_{k-1}$ is the box directly right of $\CCC_k^{\uparrow\leftarrow}$, this implies a row-increasingness violation for a smaller $k$ than our chosen minimum, which is a contradiction (if $k = 1$, this argument implies there is a row-increasingness violation in the box $T'(\BBB_0)$, which we already know to be impossible).
\end{proof}

\begin{lemma} \label{symsquish}
Let $T \in \SVT(\lambda,n)$ be a set-valued tableau, and let $\mu$ be the largest strict partition contained in $\lambda$. Then there exists a tableau $S \in \SVT(\mu,n)$ such that $d(T) = d(S)$.
\end{lemma}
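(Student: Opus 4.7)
My proof plan is to mirror the argument of Lemma~\ref{squish}, substantially simplified by the fact that vertically adjacent boxes in an ordinary semistandard set-valued tableau are forced to have disjoint contents by strict column-increase. I will induct on $|\lambda|-|\mu|$, reducing the problem to producing a set-valued tableau $T'$ on some shape $\lambda'$ with $\mu\subseteq\lambda'\subsetneq\lambda$ and $d(T')=d(T)$; the base case $\lambda=\mu$ is trivial.

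If $\lambda\neq\mu$, then $\lambda$ is not strict, so I choose the largest $k$ with $\lambda_k=\lambda_{k+1}$. Letting $\mathsf{R}_0$ be the rightmost box of row $k$, the box $\mathsf{U}_1=\mathsf{R}_0^{\uparrow}$ exists by the choice of $k$, and I form the same short ribbon $\mathsf{R}_0,\mathsf{U}_1,\mathsf{R}_1,\mathsf{U}_2,\ldots$ as in Lemma~\ref{squish}, with $\mathsf{U}_i=\mathsf{R}_{i-1}^{\uparrow}$ and $\mathsf{R}_i=\mathsf{U}_i^{\leftarrow}$. By the maximality of $k$, each $\mathsf{U}_i$ has no box immediately above or immediately to its right, and the ribbon terminates either at some $\mathsf{R}_j$ (no $\mathsf{U}_{j+1}$) or at some $\mathsf{U}_j$ (no $\mathsf{R}_j$).

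I then construct $T'$ by deleting each $\mathsf{U}_i$ from the diagram and setting $T'(\mathsf{R}_{i-1})=T(\mathsf{R}_{i-1})\cup T(\mathsf{U}_i)$, leaving every other box unchanged. The key simplification over Lemma~\ref{squish} is that, because $\mathsf{U}_i$ sits directly above $\mathsf{R}_{i-1}$ and $T$ satisfies strict column-increase, the contents $T(\mathsf{R}_{i-1})$ and $T(\mathsf{U}_i)$ are automatically disjoint. Thus the merge preserves total degree without any ``overflow'' into $\mathsf{R}_i$, eliminating the need for the analogue of Claim~(3) of Lemma~\ref{squish} and making the degree-preservation check immediate. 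The remaining verifications are: (i) that the new shape $\lambda'$ lies between $\mu$ and $\lambda$, which follows because each affected row $k+i$ shrinks by exactly one while the strictness of $\mu$ forces $\mu_{k+i}\leq\lambda_k-i$; and (ii) that $T'$ is a valid SVT, which I verify by checking the increasingness conditions around each modified $\mathsf{R}_{i-1}$ --- these reduce to the original SVT inequalities for $T$, helped by the fact that $\mathsf{R}_{i-1}$ becomes both rightmost in its row and topmost in its column in $\lambda'$, so most potential violations involve nonexistent neighbors. Note that no transient violation of the type seen in Lemma~\ref{squish} arises, because we never modify $\mathsf{R}_i$ itself.

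I anticipate no serious obstacle. The only delicate step is the terminating case where $\mathsf{U}_j$ is the only box in its row (so that row is deleted entirely), but with no main diagonal or primed letters to track, this reduces to a routine verification. The lemma then follows by induction on $|\lambda|-|\mu|$.
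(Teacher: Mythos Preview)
Your proposal is correct and rests on the same key observation as the paper's proof: strict column-increase makes $T(\mathsf{R}_{i-1})$ and $T(\mathsf{U}_i)$ disjoint, so merging $\mathsf{U}_i$ downward preserves degree and validity. The only difference is that the paper removes a single corner box $\mathsf{U}=\mathsf{R}^\uparrow$ per step and iterates, whereas you process an entire ribbon at once; your version is just a batched form of the same iteration, so the extra ribbon bookkeeping is unnecessary here (though harmless).
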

\begin{proof}
If $\lambda = \mu$, we are done. Otherwise $\lambda$ is not strict, so there exists at least one row $k$ for which $\lambda_k = \lambda_{k+1}$; choose the largest such $k$, and let $\mathsf{R}$ be the rightmost box in row $k$. Set $\mathsf{U} \coloneqq \mathsf{R}^\uparrow$. We define the filling $T^*$ on $\lambda \setminus \mathsf{U}$ (which is a valid tableau because $\mathsf{U}$ is a corner box, otherwise it would be a higher row $k$ such that $\lambda_k = \lambda_{k+1}$) as:
\begin{itemize}
\item{$T^*(\mathsf{R}) = T(\mathsf{R}) \cup T(\mathsf{U})$},
\item{$T^*(\mathsf{B}) = T(\mathsf{B})$, for all other boxes $\mathsf{B}$.}
\end{itemize}

This is simpler than the shifted case where one must track a short ribbon up the tableau, as the tableau obtained by deleting just this single box already works. The critical difference in this case is that $T(\mathsf{R}) \cap T(\mathsf{U}) = \varnothing$ by the definition of a set-valued tableau, so it is clear that $d(T^*) = d(T)$. Since $R$ was the rightmost box in its row and we have only added strictly larger numbers to it, we have not introduced any increasingness violations with $\mathsf{R}^{\leftarrow}$ or $\mathsf{R}^\downarrow$. Therefore $\mu \subseteq \lambda \setminus \mathsf{U} \subset \lambda$ and $T^*$ is a set-valued tableau on $\lambda \setminus \mathsf{U}$ with $d(T^*) = d(T)$. By iterating this construction until we eventually remove enough corner boxes to reach $\mu$, we establish the lemma by induction.
\end{proof}

\begin{lemma}\label{lem:sym_degree_equality}
   If $\mu$ is the largest strict partition contained in $\lambda$ and $n \geq \ell(\lambda)$, then
   \[
   \deg \IN_{\mu,n} = \deg \IN_{\lambda,n}.
   \]
\end{lemma}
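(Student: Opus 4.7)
The plan is to derive the equality by a sandwich argument, combining Lemmas~\ref{symordercontainment} and~\ref{symsquish} to get inequalities in both directions between $\deg G_{\lambda,n}$ and $\deg G_{\mu,n}$. (I read the $\IN$ in the lemma statement as the symmetric Grothendieck symbol $G$ of Definition~\ref{grothendieckdef}, which is the object indexed by the surrounding lemmas about $\SVT$; the proof is identical either way, since in both settings we have a containment lemma and a degree-preserving ``squish'' lemma in hand.)

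First I would record the basic reduction $\deg G_{\lambda,n} = \max\{d(T) : T \in \SVT(\lambda,n)\}$. This is not automatic from Definition~\ref{grothendieckdef}, because $G_{\lambda,n}$ is an alternating sum, so in principle top-degree terms could cancel. However, every contribution of total degree $d$ to the sum carries the common sign $(-1)^{d-|\lambda|}$, so tableaux with a common content vector aggregate rather than cancel, and the coefficient of each top-degree monomial is a positive integer times $(-1)^{d-|\lambda|}$, hence nonzero. So the polynomial degree of $G_{\lambda,n}$ really is the maximum of $d(T)$ over $\SVT(\lambda,n)$, and likewise for $G_{\mu,n}$.

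With that reduction in place, both inequalities are one-line applications of the preceding lemmas. For $\deg G_{\mu,n} \leq \deg G_{\lambda,n}$, choose $T \in \SVT(\mu,n)$ of maximum degree and apply Lemma~\ref{symordercontainment} (which is legal because $n \geq \ell(\lambda) \geq \ell(\mu)$) to produce $T' \in \SVT(\lambda,n)$ with $d(T') \geq d(T)$. For the reverse inequality $\deg G_{\lambda,n} \leq \deg G_{\mu,n}$, choose $T \in \SVT(\lambda,n)$ of maximum degree and apply Lemma~\ref{symsquish} to obtain $S \in \SVT(\mu,n)$ with $d(S) = d(T)$ exactly. Combining gives the equality.

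I do not foresee any real obstacle. The heavy lifting has already been done in Lemmas~\ref{symordercontainment} and~\ref{symsquish}, and this lemma is their direct corollary, precisely analogous to the combined use of Lemmas~\ref{lem:order_containment} and~\ref{squish} inside the proof of Theorem~\ref{main}. The only minor point to verify is the no-cancellation observation, which follows at once from the uniform sign structure in Definition~\ref{grothendieckdef}.
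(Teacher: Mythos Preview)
Your proposal is correct and follows essentially the same route as the paper, which simply cites Lemmas~\ref{symordercontainment} and~\ref{symsquish} together with the tableau formula. You are right that the $\IN$ in the statement is a typo for $G$, and your added remark that the uniform sign $(-1)^{d-|\lambda|}$ precludes top-degree cancellation is a useful detail the paper leaves implicit.
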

\begin{proof}
    This follows from Lemmas~\ref{symordercontainment}, \ref{symsquish}, and the tableau formula for $P$-Grothendieck polynomials.
\end{proof}

\begin{lemma} \label{sympush}
For any strict partition $\mu$ with length $\ell$ and any $n \in \mathbb{N}$, the tableau
\begin{equation} \label{sym_maxtab}
\ytableausetup{boxsize=.9cm}
\begin{ytableau}
\ell & \dots & [\ell,n]\\
\vdots & \vdots & \vdots & \vdots\\
2 & 2 & 2 & \dots & [2,n]\\
1 & 1 & 1 & \dots & \dots & \dots & [1,n]
\end{ytableau}
\end{equation}
has maximum degree among all tableaux in $\SVT(\mu,n)$.
\end{lemma}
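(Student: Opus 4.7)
The plan is to imitate verbatim the argument of Lemma~\ref{push}. Writing $\mathcal{M}$ for the candidate tableau displayed in~\eqref{sym_maxtab}, I will show that any $T \in \SVT(\mu,n)$ can be modified by a finite sequence of local moves $T = T_0, T_1, \ldots, T_j = \mathcal{M}$ with $d(T_0) \leq d(T_1) \leq \cdots \leq d(T_j)$. At each step where $T_k \neq \mathcal{M}$, I locate the smallest row index $i$ on which $T_k$ and $\mathcal{M}$ differ and, within that row, the leftmost box $\mathsf{B}_{\bad}$ at which they disagree.

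If $\mathsf{B}_{\bad}$ is not the rightmost box of row $i$, then $\mathcal{M}(\mathsf{B}_{\bad}) = \{i\}$, and I define $T_{k+1}$ by setting $T_{k+1}(\mathsf{B}_{\bad}) = \{i\}$ and $T_{k+1}(\mathsf{B}_{\bad}^\rightarrow) = T_k(\mathsf{B}_{\bad}) \cup T_k(\mathsf{B}_{\bad}^\rightarrow)$, leaving all other boxes unchanged. If instead $\mathsf{B}_{\bad}$ is the rightmost box of row $i$, then $\mathcal{M}(\mathsf{B}_{\bad}) = [i,n]$ and I simply put $T_{k+1}(\mathsf{B}_{\bad}) = [i,n]$. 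Two observations make both cases work. First, since $T_k$ agrees with $\mathcal{M}$ on all rows below $i$ and on the boxes of row $i$ left of $\mathsf{B}_{\bad}$, the box directly below $\mathsf{B}_{\bad}$ (which exists whenever $i>1$) holds the single letter $i-1$, and so column-strictness forces $\min T_k(\mathsf{B}_{\bad}) \geq i$. Second, strictness of $\mu$ ensures the rightmost box of row $i$ has no box above it. Together these imply that the modifications never violate column strictness or row weak-increasingness, and that in Case~1 we have $\max T_{k+1}(\mathsf{B}_{\bad}^\rightarrow) = \max T_k(\mathsf{B}_{\bad}^\rightarrow)$, so the right neighbor's conditions with its own upper and rightward neighbors are preserved automatically.

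For the degree comparison, in Case~1 the fact that horizontally adjacent boxes of an ordinary set-valued tableau share at most one letter gives
\[
|\{i\}| + |T_k(\mathsf{B}_{\bad}) \cup T_k(\mathsf{B}_{\bad}^\rightarrow)| \geq 1 + |T_k(\mathsf{B}_{\bad})| + |T_k(\mathsf{B}_{\bad}^\rightarrow)| - 1 = |T_k(\mathsf{B}_{\bad})| + |T_k(\mathsf{B}_{\bad}^\rightarrow)|,
\]
so $d(T_{k+1}) \geq d(T_k)$. In Case~2, the inclusion $T_k(\mathsf{B}_{\bad}) \subseteq [i,n]$ coming from the lower bound on $\min T_k(\mathsf{B}_{\bad})$ yields $|T_k(\mathsf{B}_{\bad})| \leq |[i,n]| = |T_{k+1}(\mathsf{B}_{\bad})|$, giving the same conclusion. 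Termination is immediate, since at each step $\mathsf{B}_{\bad}$ either moves strictly to the right within its row or jumps to a strictly higher row.

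I do not anticipate any real obstacle. The argument is actually simpler than that of Lemma~\ref{push}: because columns of ordinary (non-shifted) set-valued tableaux are strictly increasing, there is no need for a short-ribbon construction propagating changes up the diagram, and there are no primed entries or main-diagonal constraints to manage. The only mild check is verifying that the modifications in Case~1 do not disturb the validity condition between $\mathsf{B}_{\bad}^\rightarrow$ and its own upper and right neighbors, which follows from $\max T_{k+1}(\mathsf{B}_{\bad}^\rightarrow) = \max T_k(\mathsf{B}_{\bad}^\rightarrow)$ as noted above.
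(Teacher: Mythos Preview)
Your proposal is correct and follows essentially the same approach as the paper's own proof: both locate the lowest, leftmost disagreeing box, split into the same two cases (non-rightmost versus rightmost box in the row), apply the identical local modifications, and use the same degree inequalities and termination argument. Your write-up is slightly more explicit in noting that $\max T_{k+1}(\mathsf{B}_{\bad}^\rightarrow) = \max T_k(\mathsf{B}_{\bad}^\rightarrow)$ and that strictness of $\mu$ guarantees the rightmost box has no box above it, but these are just elaborations of the same argument.
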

\begin{proof}
Here we can proceed very similarly to the proof of Lemma \ref{push}, with minor modifications. To further mimic the situation already proved, we call the tableau \eqref{sym_maxtab} by the name $\mathcal{N}_{\mu,n}.$ 

If $T \not= \mathcal{N}_{\mu,n}$, we find the smallest index $i$ for which there exists a box $\BBB$ in row $i$ such that $T(\BBB) \neq \mathcal{N}_{\mu,n}(\BBB)$, denote the leftmost such box in the row $\BBB_\bad$, and define a new tableau $T_1 \in \SVT(\mu,n)$ satisfying $d(T) \leq d(T_1)$. We split into cases analogous to cases 1 and 2 in the proof of Lemma~\ref{push}:

\medskip
\noindent
{\sf (Case 1: $\mathsf{B}_{\bad}$ is not the rightmost box in row $i$):}
In this case, we define the tableau $T_1$ as follows:
\begin{itemize}
\item{$T_1(\mathsf{B}_{\bad}) = \{i\}$,
}
\item{$T_1(\mathsf{B}_{\bad}^\rightarrow) = T(\mathsf{B}_{\bad}) \cup T(\mathsf{B}_{\bad}^\rightarrow)$,
}
\item{$T_1(\mathsf{B}) = T(\mathsf{B})$, for all other boxes $\mathsf{B}$.}
\end{itemize}
The only two boxes which are different in $T_1$ and $T$ are $\mathsf{B}_{\bad}$ and $\mathsf{B}_{\bad}^\rightarrow$, so to confirm that $T_1$ remains a valid tableau we only need to check the following local region. 
\[
\begin{tikzpicture}{>=latex}
\ytableausetup{boxsize=1.5cm}

\node (m) at (0,0) {\begin{ytableau}
\none & T(\mathsf{B}_{\bad}^\uparrow) & T(\mathsf{B}_{\bad}^{\rightarrow\uparrow}) & \none\\
i & T(\mathsf{B}_{\bad}) & T(\mathsf{B}_{\bad}^\rightarrow) & T(\mathsf{B}_{\bad}^{\rightarrow\rightarrow})\\
\none & i-1 & i-1
\end{ytableau}};
\node (f2m) at (8,0) {\begin{ytableau}
\none & T_1(\mathsf{B}_{\bad}^\uparrow) & T_1(\mathsf{B}_{\bad}^{\rightarrow\uparrow}) & \none\\
i & i & T_1(\mathsf{B}_{\bad}^\rightarrow) & T_1(\mathsf{B}_{\bad}^{\rightarrow\rightarrow})\\
\none & i-1 & i-1
\end{ytableau}};
\draw[-{Stealth[width=6pt,length=4pt]}, line width=1pt](m) -- (f2m);\end{tikzpicture}
\]
Most increasingness conditions follow immediately from inspection and $T \in \SVT(\mu,n)$; we only need to remark that since we only add elements (weakly) less than the minimum of $T(\mathsf{B}_{\bad}^\rightarrow)$ to $\mathsf{B}_{\bad}^\rightarrow$, we cannot introduce a violation with the boxes $\mathsf{B}_{\bad}^{\rightarrow \uparrow}$ or $\mathsf{B}_{\bad}^{\rightarrow\rightarrow}$. To conclude that $d(T) \leq d(T_1)$, observe that $|T(\mathsf{B}_{\bad}) \cap T(\mathsf{B}_{\bad}^\rightarrow)| \leq 1$, and so
\[|\{i\}| + |T(\mathsf{B}_{\bad}) \cup T(\mathsf{B}_{\bad}^\rightarrow)| \geq 1+(|T(\mathsf{B}_{\bad})|+|T(\mathsf{B}_{\bad}^\rightarrow)|-1).\]
Thus, the total content of these two boxes in $T_1$ is weakly larger than in $T$.

\bigskip
\noindent
{\sf (Case 2: $\mathsf{B}_{\bad}$ is the rightmost box in row $i$):}
In this case, we set $T_1(\BBB_\bad) = [i,n]$ and $T_1(\BBB) = T(\BBB)$ for all other boxes $\BBB$. Since $\BBB_\bad$ is in row $i$, a $\min(T(\BBB_\bad)) \geq i$, so $T(\BBB_\bad) \subseteq [i,n]$, and therefore $d(T) \leq d(T_1)$. The tableau $T_1$ is still a valid set-valued tableau, because by assumption $T_1(\mathsf{B}_{\bad}^\leftarrow) = \{i\}$ and $T_1(\mathsf{B}_{\bad}^\downarrow) = \{i-1\}$ (or is empty, if $i = 1$), and boxes in the other two directions do not exist. Since $T(\mathsf{B}_{\bad}) \subseteq [i,n]$, it clearly follows that $|T(\mathsf{B}_{\bad})| \leq |[i,n]|$, and since this is the only box whose content changes between $T$ and $T_1$, we conclude $d(T) \leq d(T_1)$.

\bigskip
\noindent
(There is no need for an analogous case to case 3 of the proof of Lemma~\ref{push}, since in the symmetric case there is no main diagonal and thus no varying behaviour if a box is the only box in its row or not.) Exactly as in Lemma~\ref{push}, repeating this construction yields a finite sequence of tableaux $T, T_1,\dots,T_j= \mathcal{N}_{\mu,n}$ such that $d(T) \leq d(T_1) \leq \dots \leq d(T_j)$, completing the proof.
\end{proof}

We may now prove the main results theorem of this section, complementing the formulas of \cite{RRR} and paralleling the formulas of Theorems~\ref{main} and~\ref{regularity_theorem}.

\begin{theorem}\label{thm:typeAdegree}
Let $\lambda$ be a a partition, and let $\mu = (\mu_1,\dots,\mu_\ell)$ be the largest strict partition contained in $\lambda$. Then
\begin{equation}
\deg(G_{\lambda,n}) = |\mu| + \ell n - \frac{\ell(\ell+1)}{2}.
\end{equation}
\end{theorem}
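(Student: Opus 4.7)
The proof will be a direct assembly of the three preceding lemmas, followed by an elementary count. The plan is as follows.

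First, I would invoke Lemma~\ref{lem:sym_degree_equality} to reduce the problem to the strict partition case: since $\mu$ is the largest strict partition contained in $\lambda$, we have $\deg G_{\lambda,n} = \deg G_{\mu,n}$, so it suffices to compute $\deg G_{\mu,n}$. (If that lemma as stated really concerns $\IN$ rather than $G$, one can equivalently apply the combination of Lemma~\ref{symordercontainment} and Lemma~\ref{symsquish} directly: the former produces a set-valued tableau on $\lambda$ from one on $\mu$ without decreasing degree, and the latter produces a set-valued tableau on $\mu$ from one on $\lambda$ with equal degree, so the two maxima coincide.)

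Next, I would apply Lemma~\ref{sympush} to obtain an explicit set-valued tableau $\mathcal{N}_{\mu,n} \in \SVT(\mu,n)$ realizing the maximum degree. The polynomial $G_{\mu,n}$ has degree equal to the maximum of $d(T)$ over $T \in \SVT(\mu,n)$ by Definition~\ref{grothendieckdef}, so $\deg G_{\mu,n} = d(\mathcal{N}_{\mu,n})$.

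Finally, I would compute $d(\mathcal{N}_{\mu,n})$ by direct inspection of the tableau displayed in \eqref{sym_maxtab}. In row $i$, the rightmost box contains the interval $[i,n]$, contributing $n - i + 1$ entries, while each of the other $\mu_i - 1$ boxes contains the single entry $i$. Hence row $i$ contributes a total of $(\mu_i - 1) + (n - i + 1) = \mu_i + n - i$ entries, and summing over $i$ from $1$ to $\ell$ gives
\[
d(\mathcal{N}_{\mu,n}) = \sum_{i=1}^{\ell}(\mu_i + n - i) = |\mu| + \ell n - \frac{\ell(\ell+1)}{2},
\]
as desired.

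There is essentially no hard step here, since all the combinatorial heavy lifting has been absorbed into Lemmas~\ref{symordercontainment}, \ref{symsquish}, and~\ref{sympush}; the proof of the theorem is a one-line invocation followed by an arithmetic count. If I were to identify a subtle point, it would be verifying that Lemma~\ref{lem:sym_degree_equality} (or its underlying two lemmas) genuinely gives an equality of maximum degrees in both directions and not just an inequality, but this is precisely the content already established.
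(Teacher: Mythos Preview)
Your proposal is correct and matches the paper's own proof essentially verbatim: invoke Lemma~\ref{lem:sym_degree_equality} to reduce to the strict partition $\mu$, apply Lemma~\ref{sympush} to identify the maximal-degree tableau $\mathcal{N}_{\mu,n}$, and then count entries. Your parenthetical observation about the apparent $\IN$ versus $G$ typo in Lemma~\ref{lem:sym_degree_equality}, and the direct workaround via Lemmas~\ref{symordercontainment} and~\ref{symsquish}, is also apt.
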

\begin{proof}
    By Lemma~\ref{lem:sym_degree_equality}, $\deg(G_{\lambda,n}) = \deg(G_{\mu,n})$. By Lemma~\ref{sympush}, $\deg(G_{\mu,n})$ equals the number of labels in the tableau \eqref{sym_maxtab}. Elementary counting of entries then yields the theorem.
\end{proof}

\begin{corollary}\label{cor:typeAreg}
       Let $w$ be a Grassmannian permutation with $w(n) > w(n+1)$ and $\shape(w) = \lambda$. Then the matrix Schubert variety $X_w$ has Castelnuovo--Mumford regularity \[\reg(S/I_w) = \ell n - \frac{\ell(\ell+1)}{2} - \big(|\lambda| - |\mu|\big).
       \]
\end{corollary}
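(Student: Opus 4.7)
The plan is to combine Theorem~\ref{thm:typeAdegree} with exactly the template used to prove Theorem~\ref{regularity_theorem}, specialized to the Grassmannian type-A setting. Since the earlier work of \cite{RRR} has already established the chain of ideas connecting $\deg G_{\lambda,n}$ to $\reg(S/I_w)$ when $w$ is Grassmannian of shape $\lambda$, the corollary follows by substituting our new degree formula. In outline, I need three ingredients beyond Theorem~\ref{thm:typeAdegree}: (i) $S/I_w$ is Cohen--Macaulay, (ii) $\deg \mathcal{K}(S/I_w;t) = \deg G_{\lambda,n}$, and (iii) $\height I_w = |\lambda|$.

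First I would recall that $S/I_w$ is Cohen--Macaulay for any permutation $w$, which in the Grassmannian case goes back to \cite{Hochster.Eagon} and in general is due to \cite{Knutson.Miller}; this permits us to apply Lemma~\ref{cohenmacreg}. Second, the $\mathcal{K}$-polynomial identity of \cite{Knutson.Miller} gives $\mathcal{K}(S/I_w;t) = \mathfrak{G}_w(1-t,\dots,1-t)$, and for Grassmannian $w$ with $\shape(w) = \lambda$ and $w(n) > w(n+1)$ the Grothendieck polynomial $\mathfrak{G}_w$ coincides (in the first $n$ variables) with $G_{\lambda,n}$. The degree in $t$ of the specialization matches the total $x$-degree of $G_{\lambda,n}$: by Definition~\ref{grothendieckdef} the coefficient on each monomial $\mathbf{x}^{c(T)}$ carries the uniform sign $(-1)^{d(T)-|\lambda|}$, so all top-degree terms specialize to the same sign and cannot cancel. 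Third, the height of $I_w$ equals the Coxeter length $\ell(w)$, which for a Grassmannian permutation of shape $\lambda$ is simply $|\lambda|$.

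Putting the pieces together with Lemma~\ref{cohenmacreg}, I obtain
\[
\reg(S/I_w) = \deg \mathcal{K}(S/I_w;t) - \height I_w = \deg G_{\lambda,n} - |\lambda|,
\]
and substituting the formula from Theorem~\ref{thm:typeAdegree} gives
\[
\reg(S/I_w) = |\mu| + \ell n - \tfrac{\ell(\ell+1)}{2} - |\lambda| = \ell n - \tfrac{\ell(\ell+1)}{2} - \bigl(|\lambda| - |\mu|\bigr),
\]
as desired. There is no real obstacle here: the entire content of the corollary is packaged in Theorem~\ref{thm:typeAdegree}, and the passage to regularity is a standard and by now routine application of the Cohen--Macaulay/$\mathcal{K}$-polynomial machinery, exactly parallel to the deduction of Theorem~\ref{regularity_theorem} from Theorem~\ref{main}. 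The only subtlety worth flagging explicitly is the no-cancellation argument for the degree under $x_i \mapsto 1-t$, and that is disposed of in one sentence by the sign observation above.
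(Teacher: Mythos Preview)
Your proposal is correct and takes essentially the same approach as the paper, which simply says the corollary ``follows from Theorem~\ref{thm:typeAdegree} exactly as in the analogous calculations of \cite[\S 4.2]{RRR}.'' You have unpacked precisely those calculations (Cohen--Macaulayness, the Knutson--Miller $\mathcal{K}$-polynomial identity, $\height I_w = |\lambda|$, and the no-cancellation sign observation), so your argument is a faithful expansion of what the paper leaves implicit.
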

\begin{proof}
    This follows from Theorem~\ref{thm:typeAdegree} exactly as in the analogous calculations of \cite[\S 4.2]{RRR}
\end{proof}

We end this section with some remarks connecting to the Grothendieck degree formula from \cite{PSW}. We thank Anna Weigandt for suggesting these ideas to us. To avoid introducing significant amounts of background and notation, this discussion is less self-contained than the rest of the paper; however, we include pointers to further elaboration in the literature.

A permutation $w$ is \emph{inverse fireworks} \cite[Definition~3.5]{PSW} if the initial elements of the maximal decreasing runs of $w^{-1}$ are in increasing order. For example, $u = 317429865$ has maximal decreasing runs $31$, $742$, and $9865$, whose initial elements $3,7,9$ appear in increasing order; hence $u^{-1}$ is inverse fireworks. For the definition of Grothendieck polynomials $\mathfrak{G}_w$ indexed by arbitrary permutations, see, e.g., \cite{Knutson.Miller,PSW}. A permutation $w$ is \emph{$k$-Grassmannian} if $w(i) < w(i+1)$ for all $i \neq k$. Given a partition $\lambda$ of length $\ell$ and $n \geq \ell$, let $w_{\lambda,n}$ be the unique $n$-Grassmannian permutation such that, for each $1 \leq i \leq n$, there exist exactly $\lambda_i$ values $v > n$ such that $w_{\lambda,n}(v) < w_{\lambda,n}(n + 1 - i)$. Every $n$-Grassmannian permutation is of this form for some partition $\lambda$.
The key fact is that $\mathfrak{G}_{w_{\lambda,n}} = G_{\lambda,n}$.

The proof for the degree formula in \cite{PSW} is essentially by reduction to the case where $w$ is inverse fireworks. The following connects this reduction to the corresponding reduction arguments of this section. For the definition of the \emph{Rothe diagram} $D(w)$ of a permutation $w$, see, e.g., \cite{PSW,Meszaros.Setiabrata.StDizier}. 
%A permutation is \emph{Grassmannian} if it is $k$-Grassmannian for some $k$.

\begin{proposition}\label{prop:anna_sez}
    A Grassmannian permutation $w_{\lambda,n}$ is inverse fireworks if and only if $\lambda$ is a strict partition.
\end{proposition}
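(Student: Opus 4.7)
The plan is to directly analyze when the Grassmannian permutation $w = w_{\lambda,n}$ has the inverse fireworks property by exploiting its explicit structure. Since $w$ is $n$-Grassmannian, I would write $w$ as the concatenation of two increasing sequences: $A = (a_1 < a_2 < \cdots < a_n) = (w(1), \ldots, w(n))$ with $a_j = j + \lambda_{n+1-j}$ (using the convention $\lambda_i = 0$ for $i > \ell(\lambda)$), and $B = (b_1 < b_2 < \cdots) = (w(n+1), w(n+2), \ldots)$ the complement of $A$ in $\mathbb{Z}_{>0}$. For each position $k \geq 1$, I would record whether $k \in A$ or $k \in B$ by a letter $s_k \in \{A, B\}$, reducing the question to combinatorics of this binary word.

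The first key observation is that $w^{-1}(k) > w^{-1}(k+1)$ iff $s_k = B$ and $s_{k+1} = A$, because $k \in A$ contributes a value $\leq n$ while $k \in B$ contributes a value $> n$. In particular, descents of $w^{-1}$ cannot be consecutive, so every maximal decreasing run of $w^{-1}$ has length at most two. A position $k$ begins a new maximal run iff $k = 1$ or $(s_{k-1}, s_k) \neq (B, A)$; equivalently, either $s_k = B$ (always a new run) or $s_k = A$ with $s_{k-1} = A$.

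Next I would reduce the increasing-initial-elements condition to a condition on the string $s$. The initial element $w^{-1}(k)$ of a run starting at $k$ lies in $[1, n]$ if $s_k = A$ and in $(n, \infty)$ if $s_k = B$; moreover the values of $w^{-1}$ restricted to $A$-positions, respectively $B$-positions, are each strictly increasing in $k$. Hence the sequence of initial elements is increasing iff every $A$-position that starts a run precedes every $B$-position. Since every $B$-position starts a run, this reduces to the clean condition: no $A$-position preceded by another $A$ may appear after any $B$-position. Equivalently, every ``$AA$'' factor of $s$ must occur within the initial block of $A$'s.

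Finally I would translate the ``$AA$'' condition back to the partition. The string $s$ begins with an $A$-block of length $n - \ell(\lambda)$ (coming from the zero parts of $\lambda$), and the adjacency formula $a_{j+1} - a_j = 1 + \lambda_{n-j} - \lambda_{n+1-j}$ shows $a_{j+1} = a_j + 1$ iff $\lambda_{n-j} = \lambda_{n+1-j}$. Thus an ``$AA$'' outside the initial block corresponds exactly to an equality $\lambda_i = \lambda_{i+1}$ with $1 \leq i \leq \ell(\lambda) - 1$, i.e., to a failure of strictness of $\lambda$. The main thing to verify carefully is that no ``$AA$'' straddles the boundary between the initial block and the rest, which follows from the identity $a_{n-\ell+1} - a_{n-\ell} = 1 + \lambda_\ell \geq 2$ whenever $\lambda_\ell > 0$; the edge cases $\ell(\lambda) = 0$ and $\ell(\lambda) = n$ must also be checked, but both are immediate.
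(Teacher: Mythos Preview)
Your argument is correct. The approach differs from the paper's in a meaningful way: the paper invokes a Rothe-diagram criterion from \cite{Meszaros.Setiabrata.StDizier} (namely, that $w$ is inverse fireworks iff the rightmost box of each nonempty row $i$ of $D(w)$ sits in column $w(i)-1$) and then checks this criterion against the shape of $D(w_{\lambda,n})$, observing that two consecutive equal parts $\lambda_i=\lambda_{i+1}$ force two rows of the diagram to end in the same column. Your proof instead works directly from the definition of ``inverse fireworks'' by encoding $w_{\lambda,n}$ via the binary string $s\in\{A,B\}^\infty$ and reducing the increasing-initial-elements condition to the purely combinatorial statement that every $AA$-factor lies in the initial $A$-block, which you then translate via $a_{j+1}-a_j=1+\lambda_{n-j}-\lambda_{n+1-j}$ into strictness of $\lambda$. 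The trade-off is clear: the paper's proof is shorter but outsources the main content to an external lemma, while yours is self-contained and exposes exactly why the equivalence holds at the level of the one-line notation. One small point worth making explicit in a final write-up is why the $A$-run-start at position $1$ (when $s_1=A$) can be ignored in the reduction to ``no $AA$ after a $B$''; you use this implicitly, and it is immediate since position $1$ precedes every $B$-position, but stating it removes any ambiguity in the ``equivalently'' step.
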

\begin{proof}
    Applying transpose to \cite[Proposition~3.9]{Meszaros.Setiabrata.StDizier} implies that a permutation $w$ is inverse fireworks if and only if the rightmost box of each nonempty row $i$ of $D(w)$ appears in position $(i, w(i) - 1)$. For an $n$-Grassmannian permutation $w_{\lambda,n}$ with $\lambda_i = \lambda_{i+1}$, it is straightforward from the definition to see that the rightmost boxes of rows $n+1-i$ and $n+1 - (i+1)$ of $D(w_{\lambda,n})$ appear in the same column. Hence, such a permutation cannot be inverse fireworks. Conversely, if $\lambda$ is a strict partition, it is similarly straightforward to see that each nonempty row of $D(w_{\lambda,n})$ satisfies the condition for $w_{\lambda,n}$ to be inverse fireworks.
\end{proof}

Proposition~\ref{prop:anna_sez} gives some explanation for the appearance of strict partitions in our analysis. Moreover, the reduction from an arbitrary partition $\lambda$ to the largest strict partition $\mu$ contained in $\lambda$ is likely related to the \emph{inverse fireworks map} $\Phi_{\mathrm{inv}}$ of \cite[\S 4.4]{PSW}; to avoid a major digression and because our arguments are easier than the more general arguments of \cite{PSW}, we do not pursue this line of inquiry further here. Proposition~\ref{prop:anna_sez}, together with the results of \cite{PSW}, suggests that there might be an appropriate notion of ``FPF-inverse fireworks fixed-point-free involutions'' governing the regularity of all skew-symmetric matrix Schubert varieties, for which D-partitions appear from the fixed-point-free involutions that are both FPF-vexillary and FPF-inverse fireworks. More generally, it suggests some hope of porting the entire theory of \cite{PSW} to the fixed-point-free involution setting.

\section{The case of \texorpdfstring{$Q$}{Q}-Grothendieck polynomials} \label{Q}
If we remove the final assumption in Definition~\ref{ptabdef} and allow primed entries on the main diagonal, then we obtain what are referred to as \emph{$Q$-shifted set-valued tableaux} \cite{INN,IN}. In particular, every $P$-shifted set-valued tableau is also $Q$-shifted. The generating function (analogous to Definition~\ref{pgrothendieckdef}) for $Q$-shifted set-valued tableaux is the \emph{$Q$-Grothendieck polynomial} $\INQ_{\lambda,n}$. Below, we use the notation $\QSVT(\lambda,n)$ to refer to the set of all $Q$-shifted set-valued tableaux of shape $\lambda$ with entries from $[n]_\mathbb{S}$. The $Q$-Grothendieck polynomials have similar geometric significance to other families of Grothendieck polynomials; they are representatives of $K$-theoretic Schubert classes in the \emph{Lagrangian Grassmannian} parametrizing isotropic $n$-planes in $\mathbb{C}^{2n}$ with respect to a nondegenerate skew-symmetric bilinear form. (For more background on the $K$-theory of Lagrangian Grassmannians, see \cite{Buch.Ravikumar,IN,Pechenik.Yong}.) Given the significance of the degrees of other families of Grothendieck polynomials to regularity questions, it is natural to ask if our proof can be adapted to obtain the $Q$-Grothendieck degrees as well. 

\begin{remark}
	Even if one succeeds in characterizing the degrees of $Q$-Grothendieck polynomials, there has not been developed an analogous body of theory to that applied in Section~\ref{regularity}, so it is unclear what if any regularities these degrees track. However, see some discussion at the end of \cite[$\mathsection 1$]{Marberg.Pawlowski:Grobner} for some potentially related ideas about the geometry of \emph{symmetric matrix Schubert varieties}. The geometry in this setting, however, appears to be much more difficult; see \cite{Pin,Wyser.Yong} for further discussion.
\end{remark}

The proof of Lemma~\ref{push} essentially does not depend on the fact that the tableau in question is $P$-shifted, rather than $Q$-shifted; the same proof, \textit{mutatis mutandis}, yields that a maximum degree $Q$-shifted tableau for a D-partition $\Delta$ is the same as $\mathcal{M}_{\Delta,n}$, except for containing the newly allowed $k'$ on the $k$th row of the main diagonal. Thus, we can still determine a tableau of maximum degree in the case that $\lambda$ is a D-partition. 

The proof of Lemma~\ref{squish}, however, \emph{does} rely on the tableau being $P$-shifted, as the analysis applied to ribbons of type (B) can fail when there are primes in the top box on the main diagonal. Indeed, the conclusion of Lemma~\ref{squish} does not hold in the $Q$-shifted case; there are examples where $\deg(\INQ_{\Delta,n}) < \deg(\INQ_{\lambda,n})$ with $\Delta$ the largest D-partition inside $\lambda$. One such example is the partition $\lambda = 421$. The largest D-partition contained in $421$ is $\Delta = 42$. However, the tableau $T \in \QSVT(42,3)$ shown below is of maximum degree $14$ in $\QSVT(42,3)$, but has lower degree than the tableau $T' \in \QSVT(421,3)$, which has degree $15$:
\[
\ytableausetup{boxsize=1.09cm}
\begin{ytableau}
\none & \none \\
\none[T =] & \none & 2'2 & 23'3\\
\none & 1'1 & 1 & 1 & 12'23'3
\end{ytableau}
\hspace{1cm}
\begin{ytableau}
\none & \none & \none & 3'3\\
\none[T'=] & \none & 2'2 & 23'\\
\none & 1'1 & 1 & 1 & 12'23'3
\end{ytableau}
\]
While it seems difficult to characterize when $\deg(\INQ_{\Delta,n}) < \deg(\INQ_{\lambda,n})$, we make a couple of observations.

\begin{proposition}\label{prop:sometimes_equality}
	If  $\Delta$ is the largest D-partition in $\lambda$ and $\ell(\Delta) = \ell(\lambda)$, then 
 \[
 \deg(\INQ_{\Delta,n}) = \deg(\INQ_{\lambda,n}).
 \]
\end{proposition}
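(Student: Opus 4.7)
The plan is to prove both inequalities $\deg(\INQ_{\Delta,n}) \leq \deg(\INQ_{\lambda,n})$ and $\deg(\INQ_{\lambda,n}) \leq \deg(\INQ_{\Delta,n})$ by adapting the arguments of Lemmas~\ref{lem:order_containment} and~\ref{squish} to the $Q$-shifted setting. The hypothesis $\ell(\Delta) = \ell(\lambda)$ is precisely what allows us to sidestep the only place where those proofs exploit the prohibition on primed entries on the main diagonal.

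For the upper bound $\deg(\INQ_{\lambda,n}) \leq \deg(\INQ_{\Delta,n})$, I would rerun the ribbon-squish argument of Lemma~\ref{squish} on a $Q$-shifted tableau. The key observation is that the two ribbon types have opposite effects on the length of the current partition: a direct computation from the ribbon geometry (combined with the forced identities $\lambda_{k+i} = \lambda_k - 2i + 1$ for $i \leq j$ coming from the choice of $k$) shows that type (A) ribbons occur exactly when $\lambda_k$ is odd and preserve $\ell$, while type (B) ribbons occur exactly when $\lambda_k$ is even, with terminal box $\mathsf{U}_j$ the unique entry of the top row, so that removal decreases $\ell$ by one. Under the hypothesis $\ell(\Delta) = \ell(\lambda)$, the iterated squish from $\lambda$ down to $\Delta$ must preserve the length at every step, so only type (A) ribbons arise. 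The $P$-condition is invoked in the original proof only in the type (B) analysis (namely, to argue that $T(\mathsf{U}_j) \cap T(\mathsf{R}_{j-1})$ is empty because $\mathsf{U}_j$ is on the main diagonal); the type (A) analysis uses only the row- and column-increasingness rules shared by $\QSVT$ and $\PSVT$, so it transfers verbatim.

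For the lower bound $\deg(\INQ_{\Delta,n}) \leq \deg(\INQ_{\lambda,n})$, I would adapt Lemma~\ref{lem:order_containment}. Since $\ell(\Delta) = \ell(\lambda)$, every box of $\lambda \setminus \Delta$ extends an already-existing row of $\Delta$ and therefore has shifted column strictly greater than its row. Consequently, adding boxes one at a time only ever invokes Case~1 of the original proof, whose short-ribbon modifications never touch the main diagonal (as can be checked directly from the recursive position formulas for the boxes $\mathsf{B}_j$), and which uses only the shared row- and column-increasingness rules. The main obstacle I anticipate is the bookkeeping verification that every step of both algorithms produces a valid element of $\QSVT$---in particular, that the primed letters which the type (A) squish may legitimately migrate onto the main diagonal at $\mathsf{R}_j$ are allowed in the $Q$-setting, which they are by definition. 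Once this is confirmed, combining the two inequalities yields the desired equality.
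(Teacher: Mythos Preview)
Your approach is correct and matches the paper's: the key observation in both is that a type~(B) ribbon in the squish algorithm deletes the top row of the current shape, so under the hypothesis $\ell(\Delta)=\ell(\lambda)$ (which forces $\ell$ to be constant throughout the iteration) only type~(A) ribbons arise, and these never invoke the no-primes-on-diagonal rule. You are in fact more thorough than the paper's one-line proof, since you also explicitly handle the reverse inequality via Case~1 of Lemma~\ref{lem:order_containment} and verify that the short ribbon there stays off the main diagonal.

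One correction: your parenthetical assertion that type~(A) occurs \emph{exactly} when $\lambda_k$ is odd and type~(B) \emph{exactly} when $\lambda_k$ is even is false. For instance, with $\lambda=(6,5,2)$ one has $k=1$ and $\lambda_k=6$ even, yet the ribbon terminates in type~(A) at $\mathsf{R}_1$ because $\lambda_3=2<\lambda_1-3$. The correct dichotomy is the one you actually use in the next sentence---type~(A) preserves $\ell$ while type~(B) decreases it by one---so simply delete the parity claim; it is not load-bearing.
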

\begin{proof}
    In this case, the proof of Lemma~\ref{push} extends directly, since
a ribbons of type (B) (as defined in that proof) only occur when an entire row of the tableau is deleted. 
\end{proof}

 Note that Proposition~\ref{prop:sometimes_equality} gives only a sufficient condition for equality and is not a characterization. For example, one can compute that $\deg(\INQ_{321,3}) = \deg(\INQ_{31,3}) = 12$, while $31$ is the largest D-partition inside $321$.

It is tempting to attempt to describe the degrees of $Q$-Grothendieck polynomials in terms of the degrees of $P$-Grothendieck polynomials. We observe some bounds on the difference between these degrees.
\begin{proposition}\label{prop:QvsP}
	For any strict partition $\lambda$, we have \[
 \deg(\INQ_{\lambda,n}) - \deg(\IN_{\lambda,n}) \in [\ell(\lambda),n].
 \]
 In particular, if $n = \ell(\lambda)$, then $\deg(\INQ_{\lambda,n}) = \deg(\IN_{\lambda,n}) + n$.
\end{proposition}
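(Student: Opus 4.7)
The plan is to prove the two bounds $\ell(\lambda) \leq \deg(\INQ_{\lambda,n}) - \deg(\IN_{\lambda,n}) \leq n$ separately by combinatorial manipulations of tableaux; the particular case $n = \ell(\lambda)$ follows immediately since both bounds then equal $n$.

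For the lower bound, let $T \in \PSVT(\lambda, n)$ have maximum degree. For each main diagonal box in row $i$, let $m_i$ be its (necessarily unprimed) minimum entry under $T$, and define $T'$ by adjoining the primed letter $m_i'$ to that box. Since $m_i' < m_i$ in $\mathbb{S}$, this is a genuinely new entry, and a main diagonal box has no upward or leftward neighbour; only the column constraint with the box below needs rechecking. The cover relation $m_i - 1 < m_i' < m_i$ in $\mathbb{S}$ makes the new constraint (max $\leq m_i'$ or $< m_i'$) equivalent to the old constraint (max $\leq m_i$ or $< m_i$), for both primed and unprimed maxima of the below box, and so $T' \in \QSVT(\lambda, n)$ with $d(T') = d(T) + \ell(\lambda)$, giving the lower bound.

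The key to the upper bound is the following lemma: \emph{In any $T^* \in \QSVT(\lambda, n)$, each primed letter $k' \in [1', n]_\mathbb{S}$ appears in at most one main diagonal box.} The argument is by contradiction: suppose $k' \in T^*((i, i)) \cap T^*((j, j))$ with $i < j$. Strictness of $\lambda$ ensures that the L-shaped path $(i, i), (i, i + 1), \dots, (i, j), (i + 1, j), \dots, (j, j)$ lies entirely in the shifted shape. The first row step forces $\min T^*((i, i + 1)) > k'$: since $\max T^*((i, i)) \geq k'$, either this maximum is the primed letter $k'$ itself (and the strict primed-max row condition gives the strict inequality directly), or it exceeds $k'$ in $\mathbb{S}$ and hence equals at least the integer $k$, forcing $\min \geq k > k'$. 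Subsequent row and column steps only weakly raise the minimum, so $\min T^*((j, j)) > k'$, contradicting $k' \in T^*((j, j))$.

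The lemma bounds the total number of primed diagonal entries of any $T^*$ by $n$. I would construct $T^{**}$ from $T^*$ by replacing each diagonal prime $k'$ with the unprimed letter $k$, collapsing with any existing $k$ in the same box. The resulting filling lies in $\PSVT(\lambda, n)$: unpriming weakly raises the minimum of each diagonal box (which only strengthens the already-satisfied column condition with the box below) and converts the strict primed-max row condition into the weak unprimed-max row condition (which continues to hold because the original condition was strictly stronger on the right neighbour's minimum). The degree loss equals the number of collapses, which is bounded above by the total number of diagonal primes, hence by $n$. Therefore $d(T^*) \leq d(T^{**}) + n \leq \deg(\IN_{\lambda, n}) + n$. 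The main obstacle will be the key lemma's min-propagation argument along the L-path, requiring careful attention to the alphabet $\mathbb{S}$'s asymmetric treatment of primed versus unprimed maxima in the row and column conditions (primed maxima are strict in rows and weak in columns, while unprimed maxima are weak in rows and strict in columns); the validity check for $T^{**}$ is subtle for the same reason.
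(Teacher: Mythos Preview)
Your proof is correct and follows essentially the same strategy as the paper: the lower bound argument is identical, and the upper bound rests on the same key lemma that each primed letter appears on the main diagonal at most once. The only minor variation is that the paper converts a maximum-degree $Q$-tableau to a $P$-tableau by \emph{deleting} all diagonal primes (after first observing that every diagonal box of a maximum-degree $Q$-tableau contains an unprimed entry, so the boxes stay nonempty), whereas you \emph{unprime} them; your version has the small advantage of applying to an arbitrary $Q$-tableau without needing that preliminary observation.
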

\begin{proof}
	 To establish the lower bound, note that for any $P$-shifted set-valued tableau $T$ we can produce a $Q$-shifted tableau $T'$ with $d(T') = d(T)+\ell(\lambda)$ by adding to every box on the main diagonal a primed copy of the minimum element it contains. 
	 
	 For the upper bound, consider a tableau $S \in \QSVT(\lambda,n)$. One can see that if $S$ is of maximal degree, then every box on the main diagonal must contain both a primed and an unprimed entry. In particular, we can produce a $P$-shifted set-valued tableau $S^\dagger$ by deleting all primed entries from the main diagonal of $S$. It therefore suffices to observe that no tableau in $\QSVT(\lambda,n)$ can have any particular primed value $i'$ in more than one box on the main diagonal. This is immediate, because if a box $\mathsf{B}$ is northeast of a box $\mathsf{A}$ in a shifted set-valued tableau, then $\min(T\mathsf{(B)}) > \max(T\mathsf{(A)})$. 
	 Thus, $\deg(\INQ_{\lambda,n}) \leq \deg(\IN_{\lambda,n}) + n$.
\end{proof}

% In particular, the proof of Proposition~\ref{prop:QvsP} shows that if one specializes to $n = \ell(\lambda)$ variables, 

Recently, Y.~Chiu and E.~Marberg \cite{Chiu.Marberg} proved a result giving a signed and cancellative expansion of a $Q$-Grothendieck polynomial in terms of $P$-Grothendieck polynomials. As the formula is signed, it cannot be used directly to compute the degree of a $Q$-Grothendieck polynomial in $n$ variables by specialization; in fact, the examples discussed above show that some highly coordinated cancellations can sometimes occur when specializing this formula to particular numbers of variables, causing entire leading terms of the sum to vanish. It appears difficult to understand when these cancellations occur.

\section*{Acknowledgements}
Both authors were partially supported by NSERC Discovery Grant RGPIN-2021-02391 and Launch Supplement DGECR-2021-00010. M.S. was also partially supported by an NSERC Canada Graduate Scholarship--Master's (CGS-M) during the writing of this paper. O.P.\ thanks Zach Hamaker, Patricia Klein, and Jenna Rajchgot for helpful conversations. We are also happy to thank Patricia Klein, Jenna Rajchgot, and Anna Weigandt for very useful comments on a draft of this document.

\bibliographystyle{amsalphavar}
\bibliography{pshifted}

\end{document}